\newtheorem{theorem}{Theorem}[section]
\newtheorem{corollary}[theorem]{Corollary}
\newtheorem{proposition}[theorem]{Proposition}
\newtheorem{lemma}[theorem]{Lemma}
\numberwithin{equation}{section}
\theoremstyle{remark}
\newtheorem{remark}[theorem]{Remark}
\newcommand{\vcon}{\stackrel{{\mathcal V}}{\longleftrightarrow}}
\def\supp{\mathop{\rm supp}\nolimits}
\def\plane{{\mathbb R}^2}
\def\P{{\mathbb P}}
\begin{document}

\title{Percolation in the vacant set of Poisson cylinders}
\author{Johan Tykesson\thanks{The Weizmann Institute of Science,
    Faculty of Mathematics and Computer Science, POB 26, Rehovot 76100, Israel. E-mail: {\tt
      johan.tykesson@gmail.com}. Research supported by a post-doctoral grant of the Swedish
     Research Council.} \and David Windisch\thanks{The Weizmann Institute of Science,
    Faculty of Mathematics and Computer Science, POB 26, Rehovot 76100, Israel. E-mail: {\tt
      d.windisch@gmail.com}.}}

\maketitle

\begin{abstract}
We consider a Poisson point process on the space of lines in ${\mathbb R}^d$, where a multiplicative factor $u>0$ of the intensity measure determines the density of lines. Each line in the process is taken as the axis of a bi-infinite cylinder of radius $1$. We investigate percolative properties of the vacant set, defined as the subset of ${\mathbb R}^d$ that is not covered by any such cylinder. We show that in dimensions $d \geq 4$, there is a critical value $u_*(d) \in (0,\infty)$, such that with probability $1$, the vacant set has an unbounded component if $u<u_*(d)$, and only bounded components if $u>u_*(d)$. For $d=3$, we prove that the vacant set does not percolate for large $u$ and that the vacant set intersected with a two-dimensional subspace of ${\mathbb R}^d$ does not even percolate for small $u>0$.
\end{abstract}

\section{Introduction} \label{s:intro}

If randomly selected straight lines are drilled through a large
ball, do the lines decompose the ball into small pieces? Or do the
lines only chop off small components, leaving the remaining object
essentially connected? This general consideration could serve as
an inspiration for this article. Random lines are defined by a
thickened Poisson line process in ${\mathbb R}^d$, $d \geq 3$, and
we study the connected components of the complement of this
process, called the vacant set. The intensity of lines is determined
by a parameter $u>0$ and the behavior of the vacant set is shown
to undergo a phase transition in $u$. Before we go into more
detail, let us introduce the model more precisely.

\medskip

On the space $\mathbb L$ of lines in ${\mathbb R}^d$, $d \geq 2$, there exists a non-trivial Haar measure $\mu$, invariant under translations and rotations, unique up to scaling. We consider a Poisson point process $\omega$ with intensity $u \mu$ on ${\mathbb L}$, where $u>0$, defined on a suitable probability space $(\Omega, {\mathcal A}, {\mathbb P}_u)$. Each line in the support of
$\omega$ is taken to be the axis of a bi-infinite closed
cylinder with base of radius $1$. Let ${\mathcal L}$ be the union of all
such cylinders, and define the \emph{vacant set}
$${\mathcal V}={\mathbb R}^d\setminus {\mathcal L}.$$
For details on the construction of $\mu$ and $\mathcal V$, we refer to the next section. The main question we investigate in this article is whether the random set ${\mathcal V}$ has unbounded connected components. If this is the case,
then we say that ${\mathcal V}$ \emph{percolates}.
Let $\theta(u)$ be the probability that the origin belongs to an unbounded connected component of ${\mathcal V}$. We will show in Proposition~\ref{p:0-1-perc} that $\mathcal V$ percolates with probability either $0$ or $1$, hence that $\theta(u)$ is positive precisely if ${\mathcal V}$ percolates almost surely. The main results in this article concern the non-degeneracy of the critical value
\begin{align}
 u_*(d) = \inf \{ u \geq 0 : \theta(u) = 0\} \in [0, \infty].
\end{align}
It is well known that $u_*(2)=0$,
see Theorem 10.3.2 in \cite{SW08}. Indeed, the
line process tiles the plane into finite-sided polygons, and several properties of these polygons have been studied in the literature, see
Section I.4.4 in \cite{S76}. From now on, we let $d \geq 3$ unless
otherwise specified. In Theorem~\ref{t:high}, we prove that in dimensions $d \geq 3$, $\mathcal V$ does not percolate for high intensities,
\begin{equation}\label{mtm1}
\text{for any $d \geq 3$, } u_*(d) < \infty,
\end{equation}
and in Theorem~\ref{lowintthm}, we prove that in dimensions $d \geq 4$, $\mathcal V$ does percolate at low intensities,
\begin{equation}\label{mtm2}
\text{for any $d \geq 4$, } u_*(d) > 0.
\end{equation}
This last result leaves as a natural open problem the case $d =3$. We observe in Proposition~\ref{p:triangles} that for $d=3$,
${\mathcal V}\cap {\mathbb R}^2$ does not even percolate for small $u>0$, although this does happen in higher dimensions, which presumably makes dimension $3$ substantially more difficult. Further open problems are mentioned in Remark~\ref{r:problems} at the end of the article.

\medskip

To a significant extent, this work is inspired by Sznitman's
recent work \cite{Szn09} on random interlacements, which share
some common features with Poisson cylinders. The process of random
interlacements is a site percolation model in ${\mathbb Z}^d$, $d
\geq 3$, corresponding to a Poisson point process on the space of
doubly-infinite trajectories on ${\mathbb Z}^d$ modulo time-shift.
As in the investigation of random interlacements undertaken in
\cite{SS10}, \cite{SS09}, \cite{Szn10_2}, \cite{ASz10},
\cite{Szn09} and \cite{T08}, the infinite-range dependence causes
some of the key difficulties in the present model and this makes
some of the techniques developed in \cite{Szn09} helpful.

\medskip

Before we comment on the proofs of the main results \eqref{mtm1} and
\eqref{mtm2}, let us note that the Poisson cylinder model
does not dominate and is not dominated by the standard
Poisson Boolean model of continuum percolation, see Remark~\ref{r:non-domination} ((1) and (3)) below
for details. Moreover, the model exhibits infinite-range
dependence. Indeed, for any
$x,y\in{\mathbb R}^d$ with $|x-y|>2$,
\begin{equation}
 \label{e:infrangedep}
\frac{c_{d,u}}{|x-y|^{d-1}} \, \leq \,  \textup{cov}_u
(\mathbf{1}_{x \in {\mathcal V}}, \mathbf{1}_{y \in {\mathcal V}})
\,  \leq \,  \frac{c_{d,u}'}{|x-y|^{d-1}},
\end{equation}
where $c_{d,u}>0$ and $c_{d,u}'>0$ are constants depending on $d$
and $u$ and $\textup{cov}_u$ denotes covariance with respect to ${\mathbb P}_u$ (see Remark~\ref{r:non-domination} (4)). In particular, this seriously complicates the use of Peierls-type
techniques in the present setup, and excludes the use of
arguments based on comparison with the Poisson Boolean model. We
instead use a renormalization scheme inspired by \cite{Szn09}.

\medskip

In order to prove \eqref{mtm1}, we consider a rapidly increasing
sequence $(a_n)_{n \geq 1}$ and the event that the ball of radius
$a_n$ centered at the origin is traversed by a vacant path
starting inside the ball of radius $1/4$ centered at the origin
and ending at the boundary of the ball of radius $a_n$. Since any
such path must cross the boundaries of the balls of radii $a_n/4$
and $a_n/2$ centered at the origin, this event is included in the
event that there exist two balls $B_1$ and $B_2$ of radius
$a_{n-1}$ centered at distance $a_n/4$ and $a_n/2$ from the origin
and traversed by a vacant path in the same way. If the occurrence
of such a traversal in $B_1$ was independent of the occurrence of
a traversal in $B_2$, then we would obtain a recursive inequality
of the form $p_n(u) \leq (const(d) a_n)^{2(d-1)} p_{n-1}(u)^2$ for
the probability $p_n(u)$ that a traversal occurs for a ball of
radius $a_n$, and iteration of this recursion would prove that
$\theta(u)=0$ for sufficiently large $u$ (here, the factor
$(const(d) a_n)^{2(d-1)}$ comes from the number of possible
choices of $B_1$ and $B_2$ and turns out not to be problematic by
a suitable choice of $a_n$). Of course, the local pictures left by
the process in the different neighborhoods $B_1$ and $B_2$ are in
fact not independent. The key observation at this point is that we
can make these local pictures independent by removing from the
Poisson cylinder process all cylinders intersecting both $B_1$ and
$B_2$. Such a removal is permitted, because we can thereby only
increase the value of $\theta(u)$. As a result, we do obtain a
recursion as mentioned above, but for the process with certain
cylinders removed. Iterating this procedure and bounding the total
loss of intensity from the successive removals, we can deduce that
$\theta(u)=0$ for sufficiently large $u$ and thus prove
\eqref{mtm1}.

\medskip

In order to prove \eqref{mtm2}, we prove that in dimension $d \geq
4$, even the set ${\mathcal V} \cap {\mathbb R}^2$ percolates for
$u>0$ chosen sufficiently small. This does not follow from a
standard application of a Peierls-type argument, because even in
high dimensions and even if $A$ is a subset of ${\mathbb R}$, the
probability ${\mathbb P}_u [A \subseteq {\mathcal L}]$ does not
decay exponentially in the length of $A$ (see
Remark~\ref{r:non-domination} (2) below). Instead, we use a
renormalization scheme similar to the one used in the proof of
\eqref{mtm1}, where instead of a vacant path we consider a circuit
in ${\mathcal L} \cap {\mathbb R}^2$ delimiting a bounded
component containing the origin in ${\mathbb R}^2$. Again, we
exclude cylinders from the process in order to obtain independence
between the local pictures in distant neighborhoods. In order to
obtain a useful bound on the probability that a cylinder occurs in
the excluded set, we need to assume that $d \geq 4$. In
Proposition~\ref{p:triangles}, we observe that in dimension $d=3$,
the set  ${\mathcal V} \cap {\mathbb R}^2$ indeed does not even
percolate for small $u>0$.

\medskip

The article is organized as follows: In Section \ref{s:notation}
we introduce the model and some of its basic properties. In
Section \ref{s:prel}, we provide useful preliminary estimates and
a $0$-$1$-law for the event that ${\mathcal V}$ percolates. In
Section~\ref{s:high}, we prove the main result \eqref{mtm1} for
the large $u$ regime and Section \ref{s:low} is devoted to the
main result \eqref{mtm2} pertaining to the small $u$ regime.
Section~\ref{s:low} also contains the proof that ${\mathcal
V}\cap{\mathbb R}^2$ does not percolate for any $u>0$ when $d=3$.

\medskip

Throughout the paper, we use the following convention concerning
constants: $c$ and $c'$ denote strictly positive constants only depending
on the dimension $d$, and their values are allowed to change from
place to place. The constants $c_0,c_1,..., c_{11}$ are fixed and
defined at their first place of appearance. Dependence of
constants on parameters other than $d$ will be indicated. For
example, $c(u)$ denotes a constant depending on $d$ and $u$.

\medskip

{\bf Acknowledgements.} We are grateful to Itai Benjamini for
introducing us to the random cylinder model, to Ofer Zeitouni for
helpful conversations, and to Gady Kozma and Alexander Holroyd,
whose intuition has led us to Proposition~\ref{p:triangles}. We also gratefully acknowledge
pertinent comments from an anonymous referee, who in particular has helped us shorten the proofs of Lemma~\ref{l:0-1} and Corollary~\ref{c:0-1}.

\section{Notation} \label{s:notation}
The goal of this section is to introduce the model of interest in
this paper.  First, we introduce some notation. The Euclidean norm on ${\mathbb R}^d$
will be denoted by $|\cdot|$. For sets $A, B \subseteq {\mathbb R}^d$, we define their mutual distance by
\begin{align}
d(A,B) = \inf \{ |x-y| : x \in A, y \in B\},
\end{align}
and their sum by
\begin{align}
A+B=\{ x+y : x\in A, y\in B\}.
\end{align}
For $A \subseteq {\mathbb R}^d$ and $t>0$, we define the $t$-neighborhood of $A$ as the open set
\begin{align}
 A^t = \{ x \in {\mathbb R}^d: d(\{x\},A) < t \},
\end{align}
and the $t$-interior of $A$ as the open set
\begin{align}
 A^{-t} = \{ x \in {\mathbb R}^d : d(\{x\},A^c) > t\}.
\end{align}
By $B(x,r)\subset {\mathbb R}^d$ we denote the open Euclidean
ball of radius $r>0$ centered at $x\in {\mathbb R}^d$ and by $B_\infty(x,r)$ the corresponding ball with respect to the $l_\infty$-norm.
For $x \in {\mathbb R}^{d-1}$, we let $B_{d-1}(x,r)$ be
the projection of $B((0,x),r)$ onto the plane orthogonal to $e_1$,
\begin{align}
B_{d-1}(x,r)=\{y\in {\mathbb R}^d : y_1=0,\,|y-x|<r\}.
\end{align}
For $x\ge 0$, let $\lceil x \rceil$ denote the smallest integer
that is larger than or equal to $x$, and let $[x]$ denote the integer part of $x$. For a topological space $X$,
we will write ${\mathcal B}(X)$ for the Borel-$\sigma$-algebra on
$X$. We will use the symbol $\pm$ to state two separate equations,
one with $-$ and one with $+$. For example, the expression
$\aleph^{\pm}=\beth^{\pm}$ stands for ``$\aleph^{-}=\beth^{-}$
and $\aleph^{+}=\beth^{+}$''.

\medskip

For the rigorous definition of the space of lines, we follow \cite{SW08}. The set ${\mathbb L}$ of lines is defined as the affine Grassmanian of $1$-dimensional affine subspaces of ${\mathbb R}^d$. In order to define a measure on $\mathbb L$, consider the group $SO_d$ of rotations of ${\mathbb R}^d$ about the origin, equipped with the topology induced from its representation as the group of orthogonal matrices with determinant $1$ and its unique Haar measure $\nu$, normalized such that $\nu(SO_d)=1$. For $x \in {\mathbb R}^d$, the translation $\tau_x$ is defined by
\begin{equation}
 \label{e:trans}
\begin{array}{cccc}
 \tau_x : & {\mathbb R}^d &\to & {\mathbb R}^d \\
& y & \mapsto & x+y
\end{array}
\end{equation}
Let $l_1 = \{ \alpha e_1 : \alpha \in {\mathbb R}\} \subset {\mathbb R}^d$ be the line along the vector $e_1$ of the canonical basis. We use $e_1^\perp$ to denote the $(d-1)$-dimensional subspace of ${\mathbb R}^d$ orthogonal to $e_1$, and $\lambda$ for the canonical Lebesgue measure on $e_1^\perp$. Then $\mathbb L$ is endowed with the finest topology such that the mapping
\begin{equation}
 \label{e:gamma}
\begin{array}{cccc}
 \gamma : & e_1^\perp \times SO_d &\to & {\mathbb L} \\
& (x,\vartheta) & \mapsto &  \vartheta (\tau_x(l_1))
\end{array}
\end{equation}
is continuous. One can thus define the Borel-$\sigma$-algebra ${\mathcal B}({\mathbb L})$ on $\mathbb L$. Up to constant multiples, the unique Haar measure $\mu$ on $({\mathbb L}, {\mathcal B}({\mathbb L}))$ (i.e. the unique nontrivial measure invariant under all translations $\tau_x$ and rotations $\vartheta$ of ${\mathbb R}^d$) is given by
\begin{align}
 \label{e:mu}
\mu = \gamma ( \lambda \otimes \nu ).
\end{align}
For background on Haar measures, we refer to Chapter~13 of \cite{SW08}, where a proof of this last statement is given in Theorem~13.2.12, p.~588. Next, we introduce the space $\Omega$ of locally finite point measures on $({\mathbb L}, {\mathcal B}({\mathbb L}))$:
\begin{equation}
\label{e:Omega}
\begin{split}
\Omega = \Big\{& \omega = \sum_{i \geq 0} \delta_{l_i}, \text{ with } l_i \in {\mathbb L}, \text{ and}\\
& \omega(A) < \infty \text{ whenever } A \text{ is compact } \Big\}.
\end{split}
\end{equation}
We endow $\Omega$ with the $\sigma$-algebra ${\mathcal A}$
generated by the evaluation maps $e_A: \omega \mapsto \omega(A)$,
for $A \in {\mathcal B}({\mathbb L})$. For $u>0$, we then define ${\mathbb
P}_u$ as the law on $(\Omega, {\mathcal A})$ of a Poisson point
process on $({\mathbb L}, {\mathcal B}({\mathbb L}))$ with intensity measure $u
\mu$. Let ${\mathbb E}_u$ denote the expectation operator
corresponding to ${\mathbb P}_u$.

\begin{remark}
\label{r:P-inv}
Translation and rotation invariance of $\mu$ imply that the law ${\mathbb P}_u$ on $\Omega$ of the Poisson point process on $\mathbb L$ is invariant under $(\tau_x)_{x \in {\mathbb R}^d}$ and $(\vartheta)_{\vartheta \in SO_d}$: Given $\omega = \sum_{i \geq 1} \delta_{l_i} \in \Omega$, define the $\tau_x$-image of $\omega$ as $\tau_x \omega = \sum_{i \geq 1} \delta_{\tau_x^{-1} l_i}$, and similarly for $\vartheta \omega$. Then ${\mathbb P}_u = \tau_x {\mathbb P}_u = \vartheta {\mathbb P}_u$.
\end{remark}

\medskip

We now introduce the events in $\mathcal A$ that will be of interest in this work. To any element $l \in {\mathbb L}$, we associate the cylinder
\begin{align}
 C (l) = l + \overline{B(0,1)} \subset {\mathbb R}^d.
\end{align}
For a subset $A$ of ${\mathbb R}^d$ that is either open or compact, we define the set of lines whose cylinder intersects $A$,
\begin{align}
\label{e:L_A}
 L_A = \{ l \in {\mathbb L} : C(l) \cap A \neq \emptyset \} = \{ l \in {\mathbb L} : l \cap (A+ \overline{B(0,1)}) \neq \emptyset \} .
\end{align}
If $A$ is open, then $L_A$ is an open and hence measurable subset of $\mathbb L$ and if $A$ is compact, then $L_A$ is the intersection of the open sets $L_{A^{1/n}}$, $n \geq 1$, hence again measurable. We also define the set of lines whose cylinders intersect both of the open or compact sets $A$ and $B$,
\begin{align}
L_{A,B} = L_A \cap L_B.
\end{align}
From the above definitions, we can compute $\mu(L_{B(x,r)})$:

\begin{lemma} \label{l:mubd0}
For $x \in {\mathbb R}^d$ and $r \geq 0$:
\begin{equation}
\label{e:nubd0}
  \mu (L_{B(x,r)}) = \lambda(B_{d-1}(0,r+1)).
\end{equation}
\end{lemma}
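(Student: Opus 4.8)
The plan is to evaluate the pushforward $\mu = \gamma(\lambda \otimes \nu)$ directly, after first reducing to the case $x=0$ by invariance. I would begin by simplifying the geometric object: the Minkowski sum in the definition of $L_{B(x,r)}$ satisfies $B(x,r) + \overline{B(0,1)} = B(x,r+1)$, so by \eqref{e:L_A} the set $L_{B(x,r)}$ is exactly the collection of lines meeting the open ball $B(x,r+1)$. Since $\mu$ is translation invariant (Remark~\ref{r:P-inv}) and a line meets $B(x,r+1)$ if and only if its translate by $-x$ meets $B(0,r+1)$, it suffices to treat $x=0$ and prove $\mu(L_{B(0,r)}) = \lambda(B_{d-1}(0,r+1))$.

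By the defining relation $\mu = \gamma(\lambda\otimes\nu)$ from \eqref{e:mu}, read as a pushforward, I would then write
\[
\mu(L_{B(0,r)}) = (\lambda\otimes\nu)\big(\gamma^{-1}(L_{B(0,r)})\big),
\]
so that the crux is to identify this preimage. For a pair $(y,\vartheta) \in e_1^\perp \times SO_d$, the line $\gamma(y,\vartheta) = \vartheta(\tau_y(l_1))$ lies in $L_{B(0,r)}$ precisely when it meets $B(0,r+1)$, i.e. when $\dist(0,\vartheta(\tau_y(l_1))) < r+1$. The key simplification here is that $\vartheta$ is an isometry fixing the origin, so this distance equals $\dist(0,\tau_y(l_1))$; and because $y \perp e_1$, the foot of the perpendicular from the origin to the line $\{y+\alpha e_1 : \alpha \in \mathbb{R}\}$ is $y$ itself, whence $\dist(0,\tau_y(l_1)) = |y|$. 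The condition therefore reduces to $|y| < r+1$, entirely independent of the rotation $\vartheta$.

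Consequently $\gamma^{-1}(L_{B(0,r)}) = B_{d-1}(0,r+1) \times SO_d$ (identifying $e_1^\perp$ with $\{y_1 = 0\}$), and since $\lambda\otimes\nu$ is a product measure with $\nu(SO_d)=1$,
\[
\mu(L_{B(0,r)}) = \lambda(B_{d-1}(0,r+1))\,\nu(SO_d) = \lambda(B_{d-1}(0,r+1)),
\]
as claimed. I do not expect a serious obstacle; the only points requiring care are bookkeeping ones. First, $\gamma$ need not be injective, so I must use the pushforward identity $\mu(L) = (\lambda\otimes\nu)(\gamma^{-1}(L))$ and resist invoking any Jacobian-based change of variables. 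Second, the tangential configurations $|y| = r+1$ form a $\lambda$-null set, so the open-versus-closed distinction in the definition of $L_{B(0,r)}$ does not affect the measure. Finally, the degenerate value $r=0$ is handled by reading $B(x,0)$ as the point $\{x\}$, where the identical computation with the closed unit ball gives $\lambda(B_{d-1}(0,1))$.
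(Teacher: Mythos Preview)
Your proposal is correct and follows essentially the same approach as the paper's proof: reduce to $x=0$ by translation invariance, identify $\gamma^{-1}(L_{B(0,r)})$ as $B_{d-1}(0,r+1)\times SO_d$, and conclude via the product structure of $\mu$ with $\nu(SO_d)=1$. The paper compresses all of this into two sentences, whereas you spell out the distance computation and the care with non-injectivity of $\gamma$, but the underlying argument is identical.
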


\begin{proof}
 By translation invariance of $\mu$, we can assume that $x=0$. Then the cylinder $C(l)$ intersects $B(0,r)$ if and only if $\gamma^{-1}(l) \in B_{d-1}(0,r+1) \times SO_d$, so the result follows from \eqref{e:mu} and the fact that $\nu (SO_d)=1$.
\end{proof}

We define the random set of cylinders
\begin{align}
 {\mathcal L} (\omega) = \bigcup_{l \in \supp \omega} C(l)  \subseteq {\mathbb
 R}^d,
\end{align} where $\supp \omega$ stands for the support of
$\omega$.
Since any $\omega \in \Omega$ is locally finite and \eqref{e:nubd0} holds, any closed ball $\overline{B(0,n)}$ is intersected by only finitely many cylinders $C(l), l \in \supp \omega$, so that $ {\mathcal L} (\omega) \cap  \overline{B(0,n)}$ is closed for all $n \geq 1$. In particular, ${\mathcal L} (\omega)$ is itself a closed set. The vacant set ${\mathcal V}(\omega)$ is defined as the open set
\begin{align}
 {\mathcal V}(\omega) = {\mathbb R}^d \setminus {\mathcal L} (\omega).
\end{align}
For sets $A, B \subseteq {\mathbb R}^d$, we will often consider events of the form
\begin{equation}
\label{e:vcon}
\begin{split}
 \{ A \vcon B \} = \{&\omega \in \Omega: \text{there is a continuous path} \\
&\text{from } A \text{ to } B \text{ contained in } {\mathcal V} (\omega) \},
\end{split}
\end{equation}
where a continuous path from $A$ to $B$ is a continuous function $p: [0,1] \to {\mathbb R}^d$, such that $p(0) \in A$ and $p(1) \in B$.
Let us check that the event $\{A \vcon B\}$ is indeed measurable.

\begin{lemma}
\label{l:meas}
For sets $A, B \subseteq {\mathbb R}^d$,
\begin{align}
\label{meas1} \{ A \vcon B\} \in {\mathcal A}.
\end{align}
\end{lemma}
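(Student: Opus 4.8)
The plan is to rewrite the uncountable existential quantifier over continuous paths as a countable union of manifestly measurable events, by discretizing the open set $\mathcal V$ through balls with rational data. I would first fix the countable family $\mathcal Q$ of open balls $D = B(q,\rho)$ with rational center $q \in {\mathbb Q}^d$ and rational radius $\rho \in {\mathbb Q}\cap(0,\infty)$. For each such $D$, openness of $D$ makes $L_D$ a measurable subset of $\mathbb L$ (as observed after \eqref{e:L_A}), so the event
$$
\{ D \subseteq \mathcal V \} \;=\; \{ \omega : \omega(L_D) = 0 \} \;=\; e_{L_D}^{-1}(\{0\})
$$
belongs to $\mathcal A$, being the preimage of a point under a generating evaluation map. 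These are the building blocks.

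The heart of the proof is the combinatorial identity
$$
\{ A \vcon B \} \;=\; \bigcup_{(D_0,\dots,D_n)} \; \bigcap_{i=0}^{n} \{ D_i \subseteq \mathcal V \},
$$
where the union ranges over all finite chains $(D_0,\dots,D_n)$ of balls in $\mathcal Q$ satisfying the three \emph{deterministic} geometric conditions $D_0 \cap A \neq \emptyset$, $D_n \cap B \neq \emptyset$, and $D_i \cap D_{i+1} \neq \emptyset$ for $0 \le i < n$. Since $\mathcal Q$ is countable, there are only countably many admissible chains, so the right-hand side is a countable union of finite intersections of events in $\mathcal A$; granting the identity, \eqref{meas1} follows at once.

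For the inclusion $\supseteq$ I would argue directly: given $\omega$ and an admissible chain with every $D_i \subseteq \mathcal V(\omega)$, choose $a \in D_0 \cap A$, $b \in D_n \cap B$, and points $x_i \in D_i \cap D_{i+1}$; concatenating the straight segments $a \to x_0 \to x_1 \to \cdots \to x_{n-1} \to b$ yields a continuous path lying in $\bigcup_i D_i \subseteq \mathcal V(\omega)$ (each segment stays in one convex ball), which witnesses $A \vcon B$. The inclusion $\subseteq$ is the substantive direction. Here I would start from a continuous path $p:[0,1]\to\mathcal V(\omega)$ with $p(0)\in A$, $p(1)\in B$, and exploit that its image $K=p([0,1])$ is a compact subset of the open set $\mathcal V(\omega)$, so $\delta := d\big(K,\mathcal V(\omega)^c\big)\in(0,\infty]$ (replacing $\delta$ by any fixed finite value if $\mathcal L(\omega)=\emptyset$). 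By uniform continuity I would pick a partition $0=t_0<\cdots<t_n=1$ so fine that $|p(t_i)-p(t_{i+1})|<\delta/8$, then rational $q_i$ with $|q_i-p(t_i)|<\delta/8$ and rational $\rho_i\in(\delta/4,\delta/2)$, and set $D_i=B(q_i,\rho_i)$. One checks $p(t_i)\in D_i$, that $D_i\subseteq B(p(t_i),\delta)\subseteq\mathcal V(\omega)$, that $D_0\cap A\ni p(0)$ and $D_n\cap B\ni p(1)$, and that $|q_i-q_{i+1}|<3\delta/8<\rho_i+\rho_{i+1}$ forces $D_i\cap D_{i+1}\neq\emptyset$; this is exactly an admissible chain placing $\omega$ in the right-hand side.

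The only genuine obstacle is the reverse inclusion: one must produce discretizing balls with \emph{rational} data that simultaneously cover the endpoints $p(0)\in A$ and $p(1)\in B$ and remain inside $\mathcal V$, for sets $A,B$ that are completely arbitrary. The crucial point that makes this work is that the uniform positive gap $\delta$ between the compact curve $K$ and the closed set $\mathcal L(\omega)=\mathcal V(\omega)^c$ leaves enough room to nudge each center to a nearby rational point while keeping the ball inside $\mathcal V$; everything else is bookkeeping over the countable index set $\mathcal Q$.
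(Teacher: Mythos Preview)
Your proof is correct. The argument via overlapping chains of rational balls is complete: the compactness of $p([0,1])$ and openness of $\mathcal V(\omega)$ give the uniform gap $\delta$, and your explicit choices of $q_i,\rho_i$ handle all the required inclusions, including the endpoint conditions $p(0)\in D_0\cap A$ and $p(1)\in D_n\cap B$ for arbitrary sets $A,B$.

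It is worth noting that your route differs from the paper's in structure. The paper proceeds in two stages: first it reduces the general event $\{A\vcon B\}$ to singleton events $\{\{x\}\vcon\{y\}\}$ with $x,y\in\mathbb Q^d$ (via the identity \eqref{e:meas0}, using rational $1/n$-neighborhoods of $A$ and $B$), and then it expresses each singleton event as a countable union over rational polygonal paths $p$, invoking measurability of $L_{p([0,1])}$ for the compact image $p([0,1])$. Your approach collapses these two stages into one by discretizing the path with a chain of rational open balls rather than approximating the path itself by a polygonal one; this lets you treat arbitrary $A,B$ directly and only requires measurability of $L_D$ for open balls $D$, which is immediate. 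The paper's version, on the other hand, makes the reduction to rational endpoints explicit, which can be reused elsewhere. Both arguments rest on the same topological input (compact image, open vacant set, density of rationals), so the difference is organizational rather than conceptual.
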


\begin{proof}
In order to reduce the lemma to the case where $A$ and $B$ contain only one element, we begin with the following claim (recall that $e_.$ are the evaluation maps introduced below \eqref{e:Omega}):
\begin{equation}
\label{e:meas0}
\begin{split}
 \{ A \vcon B \} = \bigcup_{n \geq 1} \bigcup_{x \in A^{1/n} \cap {\mathbb Q}^d} & \bigcup_{y \in B^{1/n} \cap {\mathbb Q}^d} \{ \{x\} \vcon \{y\} \} \\
&\cap \{ e_{L_{B(x,1/n)}} = 0 \} \cap \{ e_{L_{B(y,1/n)}}=0\}.
\end{split}
\end{equation}
To see that the right-hand side in \eqref{e:meas0} is included in the left-hand side, observe that, if the right-hand side occurs, then there are points $x$ and $y$ in the $1/n$-neighborhoods of $A$ and $B$ connected by a vacant path. Moreover, thanks to the last two events on the right-hand side, $x$ and $y$ can be connected by vacant paths to $A$ and $B$ themselves. In particular, the event on the left-hand side occurs. On the other hand, if $A$ is connected to $B$ by a vacant path, then there are points $x_0 \in A \cap {\mathcal V}$ and $y_0 \in B \cap {\mathcal V}$ connected by a vacant path. In order to find $n, x$ and $y$ such that the right-hand side occurs, we choose $n \geq 1$ large enough such that $B(x_0,2/n)$ and $B(y_0,2/n)$ are subsets of the open set $\mathcal V$ and take any $x \in B(x_0,1/n) \cap {\mathbb Q}^d$ and $y \in B(y_0,1/n) \cap {\mathbb Q}^d$.

In order to prove that $\{ A \vcon B\}$ is measurable, it thus suffices to prove that $\{ \{x\} \vcon \{y\} \} \in {\mathcal A}$ for any $x,y \in {\mathbb Q}^d$. To this end, we define a \emph{rational polygonal path of $n$ steps} from $x$ to $y$ to be a continuous path $p$ from $x$ to $y$, determined by the values $$(p(0)=x, p(1/n),  \ldots, p((n-1)/n), p(1)=y) \in ({\mathbb Q}^{d})^{n+1}$$ and linear interpolation in between (i.e. $p((i/n) + \alpha) = (1- \alpha n)p(i/n) + \alpha n p((i+1)/n)$ for $0 \leq i \leq n-1$ and $0 \leq \alpha \leq 1/n$). Let ${\mathcal P}_{n,x,y}$ be the set of rational polygonal paths of $n$ steps from $x$ to $y$. Note that ${\mathcal P}_{n,x,y}$ is countable.
We claim that
\begin{align}
\label{e:meas}
\{ \{x\} \vcon \{y\}\} = \bigcup_{n \geq 0}  \bigcup_{p \in {\mathcal P}_{n,x,y}} \{ e_{L_{p([0,1])}}=0\}.
\end{align}
Indeed, if the event on the right-hand side occurs, then there exists a polygonal path from $x$ to $y$ that is not intersected by any of the cylinders and $\{\{x\} \vcon \{y\}\}$ must occur as well. If, on the other hand, $\{\{x\} \vcon \{y\}\}$ occurs, then there is some continuous path $p$ from $x$ to $y$ in the open set $\mathcal V$. Consider then a sequence $(p_n)_{n \geq 1}$ of rational polygonal paths of $n$ steps from $x$ to $y$ converging to $p$ in the supnorm. Since $p([0,1]) \subset {\mathcal V}$ and $\mathcal V$ is open, we must have $p_n([0,1]) \subset {\mathcal V}$ for $n$ large enough, implying that the event on the right-hand side in \eqref{e:meas} occurs. We have hence shown that \eqref{e:meas} holds. But since for any continuous path $p$, the set $p([0,1])$ is compact, the set $L_{p([0,1])}$ is in ${\mathcal B}({\mathbb L})$, as explained below \eqref{e:L_A}. Equation \eqref{e:meas} thus proves measurability of $\{\{x\} \vcon \{y\}\}$ and $\{ A \vcon B\}$.
\end{proof}

For any $x \in {\mathbb R}^d$, we can now define the event that $x$ belongs to an unbounded component of the vacant set,
\begin{align}
\label{e:vconinf}
 \{ x \vcon \infty \} = \bigcap_{n \geq 1} \big\{ \{x\} \vcon \partial B(0,n) \big\},
\end{align}
as well as the event that $\mathcal V$ percolates (i.e.~contains an unbounded component),
\begin{align}
\label{e:perc}
 \textup{Perc} =  \bigcup_{x \in {\mathbb Q}^d} \{x \vcon \infty\} = \bigcup_{x \in {\mathbb R}^d} \{x \vcon \infty\}.
\end{align}
We define the percolation probability
\begin{align}
\theta (u) = {\mathbb P}_u \big[ 0 \vcon \infty \big].
\end{align}
We will also consider the event that ${\mathcal V} \cap {\mathbb R}^2$ percolates, defined by
\begin{align}
\label{e:perc2}
 \textup{Perc}_2 = \bigcup_{x \in {\mathbb Q}^2 \subset {\mathbb R}^d} \{x \stackrel{{\mathcal V} \cap {\mathbb R}^2}{\longleftrightarrow} \infty\} = \bigcup_{x \in {\mathbb R}^2 \subset {\mathbb R}^d} \{x \stackrel{{\mathcal V} \cap {\mathbb R}^2}{\longleftrightarrow} \infty\},
\end{align}
where $\big\{ \{x\} \stackrel{{\mathcal V} \cap {\mathbb R}^2}{\longleftrightarrow} \infty \big\}$ is defined as in \eqref{e:vcon} and \eqref{e:vconinf} with $\mathcal V$ replaced by ${\mathcal V} \cap {\mathbb R}^2$ and we identify $\plane$ with the set of points $(x_1,...,x_d)\in{\mathbb R}^d$ for which $x_3=...=x_d=0$. Note that we have
\begin{align}
\label{e:perc2inc}
 \textup{Perc}_2 \subseteq \textup{Perc}.
\end{align}

\section{Preliminary results} \label{s:prel}

This section contains preliminary estimates, and a $0$-$1$-law for the events $\textup{Perc}$ and $\textup{Perc}_2$ introduced in \eqref{e:perc} and \eqref{e:perc2}. The following lemma provides an estimate on the measure of the set of cylinders intersecting two distant balls.
\begin{lemma} \label{l:nubd}
Consider open or closed balls $B_1 = B(x_1,r)$, $B_2=B(x_2,r) \subset {\mathbb R}^d$ such that $r \geq 0$ and $|x_1-x_2| = \alpha\ge 2(r+1)$. Then
\begin{align}
\label{e:nubd}
&c_1 \left( \frac{(r+1)^2}{\alpha} \right)^{d-1} \leq \mu(L_{B_1,B_2}) \leq c_2 \left( \frac{(r+1)^2}{\alpha} \right)^{d-1}.
\end{align}
\end{lemma}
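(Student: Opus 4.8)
The plan is to disintegrate the Haar measure $\mu$ according to the direction of the line, which reduces the computation to a one-dimensional integral over the angle that a line makes with the segment $[x_1,x_2]$. By the translation and rotation invariance of $\mu$ (cf.\ Remark~\ref{r:P-inv}) I may assume $x_1=0$ and $x_2=\alpha e_1$; write $\rho=r+1$. Recalling from \eqref{e:L_A} that a cylinder $C(l)$ meets $B(x,r)$ exactly when the line $l$ meets the ball $B(x,\rho)$, the set $L_{B_1,B_2}$ is precisely the set of lines passing through both $B(0,\rho)$ and $B(\alpha e_1,\rho)$.

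Using the parametrization $\gamma$ of \eqref{e:gamma} together with $\mu=\gamma(\lambda\otimes\nu)$ from \eqref{e:mu}, I fix $\vartheta\in SO_d$, set $\theta=\vartheta e_1$ and let $\phi\in[0,\pi]$ be the angle between $\theta$ and $e_1$. The line $\gamma(x,\vartheta)=\vartheta(\tau_x(l_1))$ has direction $\theta$ and passes through $\vartheta x\in\theta^\perp$; hence it meets $B(0,\rho)$ iff $|x|<\rho$, and it meets $B(\alpha e_1,\rho)$ iff $|x-\alpha\vartheta^{-1}P_{\theta^\perp}e_1|<\rho$, where $P_{\theta^\perp}$ is the orthogonal projection onto $\theta^\perp$ (note $\vartheta^{-1}P_{\theta^\perp}e_1\in e_1^\perp$). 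The inner $\lambda$-integral over $x\in e_1^\perp$ therefore equals the $(d-1)$-dimensional volume $v(\alpha\sin\phi)$ of the lens-shaped intersection of two balls of radius $\rho$ in $e_1^\perp$ whose centers lie at distance $\alpha|P_{\theta^\perp}e_1|=\alpha\sin\phi$ apart. Integrating over $\vartheta$ and using that the $\nu$-pushforward of $\vartheta\mapsto\vartheta e_1$ is uniform on the sphere, so that the law of $\phi$ has density proportional to $\sin^{d-2}\phi$ on $[0,\pi]$, I arrive at
\[
\mu(L_{B_1,B_2}) = c\int_0^\pi v(\alpha\sin\phi)\,\sin^{d-2}\phi\,\d\phi.
\]

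The two features of the lens volume I will use are that $0\le v(\beta)\le c\rho^{d-1}$ for all $\beta$, with $v(\beta)=0$ once $\beta\ge 2\rho$, and that $v(\beta)\ge c\rho^{d-1}$ whenever $\beta\le\rho$ (since the intersection then contains the ball $B(w/2,\rho/2)$, where $w$ is the center offset). As $\alpha\ge 2\rho$, the integrand vanishes unless $\sin\phi<2\rho/\alpha\le 1$, confining $\phi$ to short intervals near $0$ and near $\pi$; by the symmetry $\phi\leftrightarrow\pi-\phi$ I treat the neighborhood of $0$ and double. For the upper bound I use $v\le c\rho^{d-1}$ and $\sin\phi\le 2\rho/\alpha$ on $[0,\arcsin(2\rho/\alpha)]$, an interval of length at most $c\rho/\alpha$ because $\arcsin t\le\frac{\pi}{2}t$ for $t\in[0,1]$; this gives $\int\sin^{d-2}\phi\,\d\phi\le c(\rho/\alpha)^{d-1}$ and hence the bound $c_2(\rho^2/\alpha)^{d-1}$. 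For the lower bound I restrict the integral to $[0,\arcsin(\rho/\alpha)]$, where $\alpha\sin\phi\le\rho$ forces $v(\alpha\sin\phi)\ge c\rho^{d-1}$, and use $\sin\phi\ge\frac{2}{\pi}\phi$ together with $\arcsin t\ge t$ to get $\int_0^{\arcsin(\rho/\alpha)}\sin^{d-2}\phi\,\d\phi\ge c(\rho/\alpha)^{d-1}$, producing the matching $c_1(\rho^2/\alpha)^{d-1}$.

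The $\arcsin$ estimates and the bookkeeping of constants are routine. The main step to get right is the disintegration: identifying the inner $\lambda$-integral as the lens volume $v(\alpha\sin\phi)$, and justifying that the marginal of the angle $\phi$ under $\nu$ carries weight $\sin^{d-2}\phi$. Once the problem is reduced to estimating $\int_0^\pi v(\alpha\sin\phi)\sin^{d-2}\phi\,\d\phi$, it is exactly the hypothesis $\alpha\ge 2(r+1)$ that pins $\phi$ to an interval of length $\asymp\rho/\alpha$ about the axis direction, and this is what yields the two powers of $\rho=r+1$ against the single power of $\alpha$ in $(\rho^2/\alpha)^{d-1}$.
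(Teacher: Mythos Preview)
Your argument is correct. The disintegration is clean: the identification of the inner $\lambda$-integral with the lens volume $v(\alpha\sin\phi)$ is accurate (the key point being that $\vartheta x\in\theta^\perp$, so the distance from $\alpha e_1$ to the line is $|\alpha P_{\theta^\perp}e_1-\vartheta x|$, and $|P_{\theta^\perp}e_1|=\sin\phi$), and the marginal density $\sin^{d-2}\phi$ for the polar angle of a uniform point on $S^{d-1}$ is standard. The subsequent estimates on the integral are routine and correct.

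This is, however, a genuinely different route from the paper's. The paper never writes down an integral formula; instead it sandwiches $\gamma^{-1}(L_{B_1,B_2})$ between product sets of the form $\{x\in B_{d-1}(0,c\rho)\}\times\{\vartheta:\vartheta(\mathbb{R}_\pm e_1)\cap B(x_2,c\rho)\neq\emptyset\}$, thereby decoupling the ``hit $B_1$'' and ``hit $B_2$'' constraints. The $\nu$-measure of the second factor is then read off as a solid-angle fraction, estimated by a covering/packing count of $\partial B(0,\alpha)$ by balls of radius $\rho$. Your approach keeps the two constraints coupled and extracts the answer analytically from the lens volume and the $\sin^{d-2}\phi$ weight; this yields an exact formula for $\mu(L_{B_1,B_2})$ as a one-dimensional integral, which is a nice bonus. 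The paper's approach is more geometric and avoids any calculus, at the cost of the slightly ad hoc product-set inclusions (shrinking the balls by a factor $2$ for the lower bound, enlarging for the upper). Both proofs hinge on the same mechanism---the condition $\alpha\ge 2\rho$ forces the admissible directions into a cone of aperture $\asymp\rho/\alpha$---but they package it differently.
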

\begin{proof}
By translation invariance of $\mu$, we can assume that $x_1=0$. We begin with the lower bound. Observe that
\begin{equation}\label{e.l31a}
\begin{split}
\gamma^{-1}(L_{B_1,B_2}) \supset \{ & (x,\vartheta)\,:\,x\in B_{d-1}(0,(r+1)/2),\, \\ & \vartheta({\mathbb R}_+ e_1)\cap B(x_2,(r+1)/2)\neq \emptyset\}.
\end{split}
\end{equation}
By rotational invariance of $\nu$, the function $\vartheta \in SO_d \mapsto \vartheta({\mathbb R}_+ e_1) \in \partial B(0,1)$ is a uniformly distributed random variable on the unit sphere under $\nu$. In particular, the image $m$ of $\nu$ under this function,
\begin{align}\label{e.l31b}
m(A)=\nu(\{\vartheta\,:\,\vartheta({\mathbb R}_+ e_1)\cap A\neq \emptyset\}), \text{ for } A \in {\mathcal B}({\mathbb R}^d),
\end{align}
is the uniform measure on the unit sphere.

From~\eqref{e.l31a}, \eqref{e.l31b} and \eqref{e:mu}, we obtain that
 \begin{align}\label{e.l31c}
 \mu(L_{B_1,B_2})\ge \lambda(B_{d-1}(0,(r+1)/2))m(B(x_2,(r+1)/2).
 \end{align}
 We proceed in a similar way for the upper bound. Observe that
 \begin{equation}\label{e.l31d}
\begin{split}
\gamma^{-1} & (L_{B_1,B_2})\subset \\
& \{(x,\vartheta)\,:\,x\in B_{d-1}(0,r+1),\, \vartheta({\mathbb R}_+ e_1)\cap B(x_2,2(r+1))\neq \emptyset\} \\ & \cup \{(x,\vartheta)\,:\,x\in B_{d-1}(0,r+1),\, \vartheta({\mathbb R}_- e_1)\cap B(x_2,2(r+1))\neq \emptyset\} .
\end{split}
\end{equation}
From~\eqref{e.l31b} and~\eqref{e.l31d} we obtain that
\begin{align}\label{e.l31e}
\mu(L_{B_1,B_2})\le 2 \lambda(B_{d-1}(0,r+1))m(B(x_2,2(r+1))).
\end{align}
The number of balls of radius $r+1$ centered at $\partial B(0,\alpha)$ needed to cover $\partial B(0,\alpha)$ is less than $c (\alpha/(r+1))^{d-1}$ and a subset of size at least $c' (\alpha/(r+1))^{d-1}$ of these balls consists of disjoint balls. Moreover, for each such ball $B$, $m(B)$ is the same. Hence,
\begin{equation}\label{e.l31g}
\begin{split}
c \left(\frac{r+1}{2\alpha}\right)^{d-1}\le m(B(x_2, & (r+1)/2))\\ & \le m(B(x_2,2 (r+1))) \le c' \left(\frac{2(r+1)}{\alpha}\right)^{d-1}.
\end{split}
\end{equation}
We now obtain the required bounds in~\eqref{e:nubd} using~\eqref{e.l31g}, ~\eqref{e.l31c}, ~\eqref{e.l31e} together with the estimates $\lambda(B_{d-1}(0,r+1))\le c (r+1)^{d-1}$ and $\lambda(B_{d-1}(0,(r+1)/2))\ge c (r+1)^{d-1}$.
\end{proof}

\begin{remark} \label{r:non-domination}
At this point, we can make some basic observations on the Poisson
cylinder process. In particular, we will highlight some essential differences between the Poisson cylinder process and the Poisson Boolean model consisting of a union of balls of fixed radius centered at the points of a Poisson point process in ${\mathbb R}^d$, see \cite{MeesterRoy}.
\begin{enumerate}
 \item The Poisson cylinder process is not dominated by the Poisson Boolean model, regardless of the choice of dimension and parameters. Indeed, let $l(x,y)$ be the straight line segment connecting two points $x$ and $y$ in ${\mathbb R}^d$, where $|x-y| > 2$. Then Lemma~\ref{l:nubd} implies the following lower bound on the probability that $l(x,y)$ is covered by $\mathcal L$:
\begin{equation}
\label{nond1}
\begin{split}
 {\mathbb P}_u [l(x,y) \subseteq {\mathcal L}] &\geq {\mathbb P}_u [ \omega \in \Omega: \omega(L_{\{x\},\{y\}}) >0] \\
& = 1 - e^{-u\mu(L_{\{x\},\{y\}})} \\
& \geq \frac{cu}{|x -y|^{d-1}}, \text{ for } u>0.
\end{split}
\end{equation}
In particular, this probability does not decay exponentially in
the distance between $x$ and $y$ as for the Poisson Boolean model
(cf. \cite{BJST09}, Lemma 3.4), showing that the Poisson cylinder
process is not dominated by the Poisson Boolean model with fixed radius balls. We do not know if it is impossible to dominate the Poisson cylinder process with the Poisson Boolean model with balls of some unbounded random radius.
 \item
The estimate \eqref{nond1} also implies that the probability ${\mathbb P}_u [A \subseteq {\mathcal L}]$ does not even decay exponentially in the length of $A$ if $A$ is restricted to be a subset of ${\mathbb R}$. This is a sharp contrast to random interlacements, where exponential decay of the corresponding probability does occur in dimension $d \geq 4$ if $A$ is restricted to be a subset of ${\mathbb Z}$ and at least in dimensions $d \geq 18$ for subsets $A$ of ${\mathbb Z}^2$ (see \cite{Szn09}, Theorem 2.4 for a generalized version of this assertion).
\item
The Poisson cylinder process does not dominate the Poisson Boolean
model, regardless of the choice of dimension and parameters.
Indeed, let $r>0$ and consider the probability that no cylinder
intersects the ball $B(0,r)$. By Lemma~\ref{l:mubd0}, we have
\begin{align*}
 {\mathbb P}_u [B(0,r) \subseteq {\mathcal V}] &= {\mathbb P}_u [ \omega \in \Omega: \omega(L_{B(0,r+1)}) =0] \\
&= e^{ - u \lambda (B_{d-1}(0,r+1))} \\
&\geq e^{-cur^{d-1}}, \text{ for } u>0.
\end{align*}
In particular, this probability does not decay exponentially in
the volume of $B(0,r)$, showing that the Poisson cylinder process
does not dominate the Poisson Boolean model.
\item
Lemma~\ref{l:nubd} also implies that the Poisson cylinder process exhibits long-range dependence: for any $x,y \in {\mathbb R}^d$ such that $|x-y| > 2$,
\begin{align}
 \label{e:cov0}
 c'ue^{-cu} \frac{1}{|x-y|^{d-1}} \, \leq \, \textup{cov}_u(\mathbf{1}_{x \in {\mathcal V}}, \mathbf{1}_{y \in {\mathcal V}}) \, \leq \, \frac{cu}{|x-y|^{d-1}},
\end{align}
where $\textup{cov}_u$ denotes covariance with respect to ${\mathbb P}_u$.
\end{enumerate}
\end{remark}

Using a similar procedure as in \cite{Szn09}, we prove in the following lemma that $(\tau_x)_{x \in {\mathbb R}^d}$ is ergodic.

\begin{lemma} \label{l:0-1}
For any $r>0$ and any measurable function $f : \Omega \to [0,1]$ satisfying $f(\omega)=f(\omega \mathbf{1}_{L_{B(0,r)}})$,
\begin{align}
\label{0-1:gen1} \Big| {\mathbb E}_u [f \, f\circ \tau_x] -
{\mathbb E}_u [f]^2 \Big| \leq cu \bigg( \frac{(r+1)^2}{|x|}
\bigg)^{d-1}.
\end{align}
Moreover, for any event $A \in {\mathcal A}$,
\begin{align}
 \label{0-1:gen}
\text{if $\tau_x(A)=A$ for all $x \in {\mathbb Q}  \subset {\mathbb R}^d$, ${\mathbb P}_u$-a.s., then ${\mathbb P}_u [A] \in \{0,1\}$.}
\end{align}
\end{lemma}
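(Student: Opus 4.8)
The plan is to prove \eqref{0-1:gen1} first by a covariance estimate built on Lemma~\ref{l:nubd}, and then derive the zero-one law \eqref{0-1:gen} from it by a standard approximation-and-ergodicity argument. For \eqref{0-1:gen1}, the key point is that $f$ depends only on the cylinders meeting $B(0,r)$, while $f\circ\tau_x$ depends only on those meeting $B(x,r)$. I would decompose the Poisson process according to whether a line lies in $L_{B(0,r),B(x,r)}$ (i.e.\ its cylinder meets \emph{both} balls) or not. Write $\omega = \omega' + \omega''$ where $\omega'' = \omega\,\mathbf 1_{L_{B(0,r),B(x,r)}}$ and $\omega'$ is the restriction to the complement; by the independence property of Poisson processes, $\omega'$ and $\omega''$ are independent. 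On the complement, the restriction to $L_{B(0,r)}$ and the restriction to $L_{B(x,r)}$ are supported on \emph{disjoint} sets of lines, hence are independent. The idea is that, conditionally on $\omega''=0$, the values $f(\omega)$ and $f(\omega)\circ\tau_x$ become functions of these two independent restrictions and therefore decouple.

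\medskip

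Concretely, let $p = {\mathbb P}_u[\omega(L_{B(0,r),B(x,r)})=0] = e^{-u\mu(L_{B(0,r),B(x,r)})}$. On the event $\{\omega(L_{B(0,r),B(x,r)})=0\}$ the two factors are conditionally independent, so that
\begin{align}
\big|{\mathbb E}_u[f\,f\circ\tau_x] - {\mathbb E}_u[f]^2\big| \leq 2(1-p).
\end{align}
Indeed, up to the contribution of the event $\{\omega(L_{B(0,r),B(x,r)})>0\}$, which has probability $1-p$ and on which $f,f\circ\tau_x$ are bounded by $1$, the product $f\,f\circ\tau_x$ splits into a product of two independent factors whose means are within $1-p$ of ${\mathbb E}_u[f]$. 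Since $1-p = 1 - e^{-u\mu(L_{B(0,r),B(x,r)})} \leq u\mu(L_{B(0,r),B(x,r)})$, an application of the upper bound in Lemma~\ref{l:nubd} with $\alpha = |x|$ (valid once $|x|\geq 2(r+1)$, and trivially adjustable for smaller $|x|$ by enlarging the constant) yields the claimed bound $cu\big((r+1)^2/|x|\big)^{d-1}$.

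\medskip

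For \eqref{0-1:gen}, let $A$ be translation-invariant under the rational shifts and set $f_r = {\mathbb P}_u[A \mid {\mathcal A}_r]$, where ${\mathcal A}_r$ is the sub-$\sigma$-algebra generated by $\omega\,\mathbf 1_{L_{B(0,r)}}$; then $f_r$ satisfies the hypothesis of \eqref{0-1:gen1} and, by the martingale convergence theorem, $f_r \to \mathbf 1_A$ in $L^2({\mathbb P}_u)$ as $r\to\infty$. Using invariance of $A$ and of ${\mathbb P}_u$ (Remark~\ref{r:P-inv}), I would first send $|x|\to\infty$ in \eqref{0-1:gen1} to get $\big|{\mathbb E}_u[f_r\,f_r\circ\tau_x]-{\mathbb E}_u[f_r]^2\big|\to 0$, and then send $r\to\infty$, noting that $f_r\circ\tau_x$ approximates $\mathbf 1_{\tau_x^{-1}A}=\mathbf 1_A$; this gives ${\mathbb P}_u[A]={\mathbb P}_u[A]^2$, whence ${\mathbb P}_u[A]\in\{0,1\}$.

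\medskip

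The main obstacle is the conditional-independence step in \eqref{0-1:gen1}: one must argue carefully that removing the cylinders in $L_{B(0,r),B(x,r)}$ genuinely decouples $f$ from $f\circ\tau_x$, and control the error from this removal uniformly in $f$. The decay rate then comes entirely from Lemma~\ref{l:nubd}, and the passage to the limit in \eqref{0-1:gen} requires only the standard $L^2$ approximation of $\mathbf 1_A$ by the conditional expectations $f_r$ together with the invariance of both $A$ and the law ${\mathbb P}_u$.
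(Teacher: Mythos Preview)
Your proposal is correct and follows essentially the same route as the paper: decouple $f$ and $f\circ\tau_x$ by removing the lines in $L_{B(0,r),B(x,r)}$, bound the resulting error via Lemma~\ref{l:nubd}, and then obtain the $0$--$1$ law by approximating $\mathbf 1_A$ by local functions. The only cosmetic differences are that the paper writes the decoupling as a splitting $f=f_1+f_2$ with $f_1(\omega)=f(\omega\mathbf 1_{L_{B(0,r)}\setminus L_{B(x,r)}})$ (which yields the constant $2$ directly, whereas your conditioning argument gives a slightly larger but equally harmless constant), and the paper phrases the approximation step as $L^1$-approximation rather than martingale convergence.
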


\begin{proof}
First, we observe that \eqref{0-1:gen} is a standard consequence of \eqref{0-1:gen1}.
Indeed, by $L^1({\mathbb P}_u)$-approximation of the indicator function of $A$ in \eqref{0-1:gen} by functions as in \eqref{0-1:gen1}, one deduces from \eqref{0-1:gen1} that ${\mathbb P}_u[A] = {\mathbb P}_u[A]^2$, hence \eqref{0-1:gen}. It thus only remains to prove \eqref{0-1:gen1}.

To this end, consider any $f$ as in the statement and for any $x \in {\mathbb R}^d$ such that $|x|>r$, let $g = f \circ \tau_x$. Note that then $g(\omega) = g(\omega \mathbf{1}_{B(x,r)})$. We write $f=f_1+f_2$, where $f_1 (\omega)=f(\omega \mathbf{1}_{L_{B(0,r)} \setminus L_{B(x,r)}})$ and $f_2=f-f_1$. Observe that the processes $\omega \mathbf{1}_{L_{B(0,r)} \setminus L_{B(x,r)}}$ and $\omega \mathbf{1}_{L_{B(x,r)}}$ are independent, so $f_1$ and $g$ are independent. Hence
\begin{equation}
\label{0-1:1}
\begin{split}
 {\mathbb E}_u[fg] &= {\mathbb E}_u [f_1] {\mathbb E}_u [g] + {\mathbb E}_u [f_2 g] \\
&={\mathbb E}_u[f] {\mathbb E}_u[g] - {\mathbb E}_u [f_2] {\mathbb E}_u [g] + {\mathbb E}_u [f_2 g].
\end{split}
\end{equation}
By translation invariance of $\omega$, we have ${\mathbb E}_u[g]={\mathbb E}_u[f]$. Using the bounds $|f_2 (\omega)| \leq \mathbf{1}_{\omega(L_{B(0,r),B(x,r)}) \geq 1}$, $|g| \leq 1$ and the Chebychev inequality, we thus deduce from \eqref{0-1:1} that
\begin{align*}
 \bigl| {\mathbb E}_u [f f \circ \tau_x] - {\mathbb E}_u[f]^2 \bigr| &\leq 2 {\mathbb P}_u [ \omega \in \Omega : \omega(L_{B(0,r),B(x,r)}) \geq 1 ]\\
& \leq 2 u \mu (L_{B(0,r),B(x,r)}).
\end{align*}
The estimate \eqref{0-1:gen1} thus follows from Lemma~\ref{l:nubd} and the proof of Lemma~\ref{l:0-1} is complete.
\end{proof}

The $0$-$1$-law for the events $\textup{Perc}$ and $\textup{Perc}_2$ introduced in \eqref{e:perc} and \eqref{e:perc2} can now be deduced from Lemma~\ref{l:0-1}.
\begin{corollary} \label{c:0-1}
For any $d \geq 3$ and $u>0$,
 \label{p:0-1-perc}
\begin{align}
\label{e:0-1}
&{\mathbb P}_u [ \textup{Perc} ] \in \{0,1\},\\
\label{e:0-1-perc2}
&{\mathbb P}_u [ \textup{Perc}_2 ]  \in \{0,1\}.
\end{align}
\end{corollary}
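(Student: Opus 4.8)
The plan is to deduce both statements directly from the ergodicity estimate of Lemma~\ref{l:0-1}, specifically from the $0$-$1$-law \eqref{0-1:gen}. For this it suffices to check that $\textup{Perc}$ and $\textup{Perc}_2$ are measurable and that each is invariant under $\tau_x$ for all $x$ in some rational line $\mathbb{Q} \subset {\mathbb R}^d$. Measurability is immediate: each event $\{\{x\} \vcon \partial B(0,n)\}$ lies in $\mathcal A$ by Lemma~\ref{l:meas}, so the countable unions and intersections in \eqref{e:vconinf}, \eqref{e:perc} and \eqref{e:perc2} keep us inside $\mathcal A$. The real content is therefore the invariance, which I would establish from the transformation rule for the vacant set.

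First I would record how $\mathcal V$ transforms under $\tau_x$. Using $\tau_x \omega = \sum_i \delta_{\tau_x^{-1} l_i}$ from Remark~\ref{r:P-inv} together with $C(\tau_{-x} l) = \tau_{-x}(C(l))$, one gets $\mathcal{L}(\tau_x \omega) = \tau_{-x}(\mathcal{L}(\omega))$ and hence $\mathcal{V}(\tau_x \omega) = \tau_{-x}(\mathcal{V}(\omega))$ for every $x \in {\mathbb R}^d$. Since $\tau_{-x}$ is a homeomorphism of ${\mathbb R}^d$, it carries unbounded components of $\mathcal{V}(\omega)$ to unbounded components of $\tau_{-x}(\mathcal{V}(\omega)) = \mathcal{V}(\tau_x \omega)$. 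Thus $\omega \in \textup{Perc}$ if and only if $\tau_x \omega \in \textup{Perc}$, i.e.\ $\tau_x(\textup{Perc}) = \textup{Perc}$ for \emph{every} $x$ (not merely almost surely). Applying \eqref{0-1:gen}, with $\mathbb{Q}$ taken to be the rational multiples of $e_1$, then yields \eqref{e:0-1}.

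For \eqref{e:0-1-perc2} the same idea works, but I would restrict to in-plane translations. If $x$ lies in the distinguished copy of ${\mathbb R}^2 = \{x_3 = \cdots = x_d = 0\}$, then $\tau_{-x}$ maps ${\mathbb R}^2$ onto itself, so from $\mathcal{V}(\tau_x \omega) = \tau_{-x}(\mathcal{V}(\omega))$ one obtains $\mathcal{V}(\tau_x \omega) \cap {\mathbb R}^2 = \tau_{-x}(\mathcal{V}(\omega) \cap {\mathbb R}^2)$, with $\tau_{-x}|_{{\mathbb R}^2}$ a homeomorphism of ${\mathbb R}^2$. Consequently $\mathcal{V}(\omega) \cap {\mathbb R}^2$ percolates within ${\mathbb R}^2$ if and only if $\mathcal{V}(\tau_x \omega) \cap {\mathbb R}^2$ does, so $\tau_x(\textup{Perc}_2) = \textup{Perc}_2$ for every $x \in {\mathbb R}^2$. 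Since $e_1 \in {\mathbb R}^2$, the rational multiples of $e_1$ form a line $\mathbb{Q} \subset {\mathbb R}^2 \subset {\mathbb R}^d$ along which $\textup{Perc}_2$ is invariant, and \eqref{0-1:gen} applies once more. The one point requiring care, and the only place where the two cases genuinely differ, is exactly this: $\textup{Perc}_2$ is \emph{not} invariant under translations transverse to ${\mathbb R}^2$, so one must ensure that the line $\mathbb{Q}$ in \eqref{0-1:gen} can be chosen inside ${\mathbb R}^2$ — which it can. Beyond this observation the argument is routine.
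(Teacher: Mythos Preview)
Your proof is correct and follows exactly the approach of the paper, which simply invokes \eqref{0-1:gen} together with the invariance of $\textup{Perc}$ and $\textup{Perc}_2$ under $(\tau_x)_{x\in\mathbb{Q}}$; you have just spelled out the transformation rule $\mathcal{V}(\tau_x\omega)=\tau_{-x}(\mathcal{V}(\omega))$ and the consequent invariance that the paper leaves implicit. Your remark that $\textup{Perc}_2$ requires the line $\mathbb{Q}$ in \eqref{0-1:gen} to lie inside $\mathbb{R}^2$ is a genuine point worth making explicit, and the paper's one-line proof does tacitly rely on this.
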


\begin{proof}
The assertions follow from \eqref{0-1:gen} and invariance of $\textup{Perc}$ and $\textup{Perc}_2$ under $(\tau_x)_{x \in {\mathbb Q}}$.
\end{proof}

\section{Absence of percolation for high intensities} \label{s:high}
The main objective of this section is to prove in Theorem~\ref{t:high} assertion~\eqref{mtm1} that in $3$ or higher dimensions, ${\mathcal V}$ does not percolate when $u$ is large enough. In the Poisson Boolean model, this can be proved by comparison with ordinary discrete percolation on the $d$-dimensional lattice (cf.~\cite{MeesterRoy}), but Remark~\ref{r:non-domination} shows that such an approach is impossible in the present case. Instead, we make use of a renormalization method, inspired by Section $3$ of~\cite{Szn09}. Although the sprinkling techniques developed in \cite{Szn09} for random interlacements cannot be used here due to the deterministic nature of cylinders, Poisson cylinders have the convenient property that any single cylinder occupies only a relatively small density of volume (order $n$ versus order $n^2$ for random interlacement paths, in a ball of radius $n$). This feature makes it easier to exclude sets of lines from the process without changing the percolative properties of the vacant set.

\begin{theorem}
\label{t:high}
For any dimension $d \geq 3$, there exists a constant $c \in (0,
\infty)$ such that $\theta(u)=0$ for all $u \geq c$.
\end{theorem}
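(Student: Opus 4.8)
The plan is to run the renormalization scheme from the introduction, but organized so that the removal of cylinders makes the recursion \emph{exactly} multiplicative, with the only cost showing up at the base scale through a quantity that I can keep below a \emph{fixed} threshold, independent of $u$. First I would reduce the statement to crossing probabilities. Fix a rapidly increasing sequence $(a_n)_{n\ge 0}$ with $a_0\ge 1$, to be chosen later, and set $p_n(u)=\mathbb P_u[B(0,1/4)\vcon\partial B(0,a_n)]$. Since $0\in B(0,1/4)$, one has $\{0\vcon\infty\}\subseteq\{B(0,1/4)\vcon\partial B(0,a_n)\}$ for every $n$, so $\theta(u)\le\inf_n p_n(u)$; it therefore suffices to show $p_n(u)\to 0$ for all large $u$.

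The geometric step is that any vacant path realizing the scale-$n$ crossing event must meet the spheres $\partial B(0,a_n/4)$ and $\partial B(0,a_n/2)$. Covering each sphere by $O(a_n^{d-1})$ balls of radius $1/4$ and centering sub-balls $B_i=B(c_i,a_{n-1})$ at the cover points through which the path passes, the scale-$n$ crossing event is contained in the union, over at most $K_n:=(c\,a_n)^{2(d-1)}$ pairs $(B_1,B_2)$, of the event that $B_1$ and $B_2$ are each crossed ``in the same way'', i.e.\ from $B(c_i,1/4)$ to $\partial B(c_i,a_{n-1})$; here $|c_1-c_2|\ge a_n/4$. The combinatorial factor $K_n$ will be controllable because the squaring in the recursion beats it for a suitable choice of $(a_n)$.

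The crux is the decoupling. For a fixed pair, each sub-crossing event is decreasing in the set of cylinders and depends only on the cylinders meeting $B_1$, resp.\ $B_2$. Removing the cylinders in $L_{B_1,B_2}$ only enlarges each event, and afterwards the surviving cylinders meeting $B_1$ and those meeting $B_2$ form independent Poisson processes on the disjoint line sets $L_{B_1}\setminus L_{B_1,B_2}$ and $L_{B_2}\setminus L_{B_1,B_2}$. This yields the \emph{exact} bound $\mathbb P_u[B_1,B_2\text{ crossed}]\le P(B_1)P(B_2)$, where $P(B_i)$ is the crossing probability of $B_i$ in the process with $L_{B_1,B_2}$ removed, \emph{with no additive error}. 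Iterating down to scale $0$, I would record for each scale-$0$ ball the total set $R$ of lines removed along its branch of the recursion; by Lemma~\ref{l:nubd} the part of $R$ meeting a scale-$0$ ball has $\mu$-measure at most $\eta:=c\sum_{m\ge 1}(a_0 a_{m-1}/a_m)^{d-1}$, a sum dominated by its first term $(a_0^2/a_1)^{d-1}$.

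The base estimate is where the removal is paid for, cheaply: a scale-$0$ crossing forces that no surviving cylinder covers $B(c,1/4)$, since a covered ball contains no vacant starting point. The lines with $d(c,l)<3/4$ all satisfy $B(c,1/4)\subseteq C(l)$ and have measure $\kappa:=\lambda(B_{d-1}(0,3/4))>0$ (computed as in Lemma~\ref{l:mubd0}); after removing $R$ at least $\kappa-\eta$ of their measure remains, so the thinned base crossing probability is at most $e^{-u(\kappa-\eta)}$. I would then choose $(a_n)$ \emph{independently of $u$} so fast that $\eta\le\kappa/2$ while keeping $S:=2(d-1)\sum_{m\ge 1}2^{-m}\log(c\,a_m)<\infty$ (e.g.\ $\log a_m=Tm2^{m/2}$ with $T$ large makes $\eta$ as small as desired with $S$ finite). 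Unfolding the exact recursion gives $p_n(u)\le\prod_{m=1}^n K_m^{2^{n-m}}\,e^{-u(\kappa-\eta)2^n}\le\exp\!\big(2^n(S-u\kappa/2)\big)$, which tends to $0$ once $u>2S/\kappa$, so $c=2S/\kappa$ works. The hard part, and the reason this closes, is precisely that the decoupling carries \emph{no} $u$-dependent cost: a naive decoupling bounding the thinned crossing by the full one would pay an additive $u\,\mu(L_{B_1,B_2})$ per step, forcing $u$-dependent scales and inflating $S$ to order $\log u$, which never closes. Because removal enters only through the fixed geometric constant $\eta$ at the base, the scales and $S$ can be fixed once and for all, and the exponential-in-$u$ smallness of the base then overpowers $S$ for all large $u$.
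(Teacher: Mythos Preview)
Your argument is correct and is essentially the paper's proof: the same dyadic renormalization, the same removal-for-independence trick exploiting that vacant crossings are decreasing in the cylinder configuration, and the same key observation that the total cost of removal is a fixed geometric constant $\eta$ paid only at the base scale, so that $e^{-u(\kappa-\eta)2^n}$ beats the combinatorial factor $\prod_m K_m^{2^{n-m}}$ for all large $u$. The only differences are bookkeeping---the paper first unfolds the full tree to the $2^n$ leaf balls and then removes, for each leaf $v$, the lines meeting both $B(x^n_{v,I_v},1/4)$ and some other leaf ball (their $L_v^-$, bounded in Lemma~\ref{l:nuL-}), whereas you remove $L_{B_1,B_2}$ level by level and accumulate along each branch---and a minor technical point: your estimate $\eta\le c\sum_m (a_0 a_{m-1}/a_m)^{d-1}$ implicitly uses an asymmetric-radius extension of Lemma~\ref{l:nubd} (namely $\mu(L_{B(x,r_1),B(y,r_2)})\le c((r_1{+}1)(r_2{+}1)/|x{-}y|)^{d-1}$, which follows by the same argument), while the paper sidesteps this by only ever comparing radius-$1/4$ balls in its removal bound.
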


\begin{proof}
Let $c_3$ be the volume of the $(d-1)$-dimensional ball of radius $3/4$:
\begin{align}
\label{e:vol}
c_3 = \lambda(B_{d-1}(0,3/4)).
\end{align}
For $c_2$ as on the right hand side of \eqref{e:nubd}, we then
define the number
\begin{align}
\label{e:a} a_0 = \left( 2 + 4 c_2 c_3^{-1} 5^{2(d-1)} \right)^{1/(d-1)} + 8^4,
\end{align}
as well as the rapidly growing sequence
\begin{align}
\label{e:an}
a_n = a_0^{(3/2)^n}, \, n \geq 0.
\end{align}
For any $n \geq 1$ that will remain fixed until the end of the proof, and $k = 1,2, \ldots, n$, we introduce the numbers
\begin{align}
\label{e:m}
m_k = c_4 a_{n-k+1}^{d-1},
\end{align}
where the dimension-dependent constant $c_4>1$ is chosen such that
\begin{equation}
\label{e:mkprop}
\begin{split}
&\text{the set $\partial B (0,a_{n-k+1}/4) \cup \partial B (0, a_{n-k+1}/2 )$ can be covered by $m_k$}\\
&\text{balls of radius $1/4$ centered in $\partial B (0,a_{n-k+1}/4) \cup \partial B (0, a_{n-k+1}/2 )$.}
\end{split}
\end{equation}
The aim now is to prove that $\theta(u)=0$, where
\begin{align}
\label{e:largelambda}
u = \frac{2}{c_3} \log \left(2c_4^2 a_0^{6(d-1)} \right),
\end{align}
which implies the theorem with $c=\left(2/c_3\right) \log
\left(2c_4^2 a_0^{6(d-1)}\right).$  For $1 \leq k \leq n$, we
define the level-$k$ index set by
\begin{align}
\label{e:ind}
{\mathcal I}_k = \{1,2,\ldots, m_1\} \times \{1,2,\ldots, m_2\} \times \cdots \times \{1,2,\ldots,m_k\}.
\end{align}
On ${\mathcal I}_{k+1}$, $1 \leq k \leq n-1$, we define the projection
$\pi_k : {\mathcal I}_{k+1} \to {\mathcal I}_k$ onto the first $k$
coordinates by $\pi_k ((i_1, \ldots, i_{k+1})) = (i_1, \ldots,
i_k)$. Moreover, for vectors $a$ and $b$, we let $(a,b)$ be
the concatenation of $a$ and $b$. Using \eqref{e:mkprop}, we can
now recursively define the array
$$(x^1_{v,i})_{v \in \{1,2\}, i \in {\mathcal I}_1},
(x^2_{v,i})_{v \in \{1,2\}^2, i \in {\mathcal I}_2}, \ldots,
(x^n_{v,i})_{v \in \{1,2\}^n, i \in {\mathcal I}_n},$$ such that
for $k=0,1, \ldots, n-1$, $i\in {\mathcal I}_k$, and $x^{0}_{.,.}
=0$ (here $\pi_0(i')=i$ is defined to hold for all $i' \in {\mathcal I}_1$),
\begin{equation}
\label{e:cov}
\begin{split}
&\partial B (x^k_{v,i},a_{n-k}/4) \subseteq \bigcup_{i' \in {\mathcal I}_{k+1}: \pi_k(i')=i} B(x^{k+1}_{(v,1),i'},1/4), \text{ and} \\
&\partial B (x^k_{v,i},a_{n-k}/2) \subseteq \bigcup_{i' \in {\mathcal I}_{k+1}: \pi_k(i')=i} B(x^{k+1}_{(v,2),i'},1/4).
\end{split}
\end{equation}
\begin{figure}[ht]
\psfrag{0}[cc][cc][1.5][0]{$x^k_{v,i}$}
\psfrag{an/2}[cc][cc][1.5][0]{$a_{n-k}/2$}
\psfrag{an/4}[cc][cc][1.5][0]{$a_{n-k}/4$}
\psfrag{an-1/2}[cc][cc][1.5][0]{$a_{n-k-1}/2$}
\psfrag{an-1/4}[cc][cc][1.5][0]{$a_{n-k-1}/4$}
\psfrag{p}[cc][cc][1.5][0]{$p$}
\begin{center}
\includegraphics[angle=0, width=0.7\textwidth]{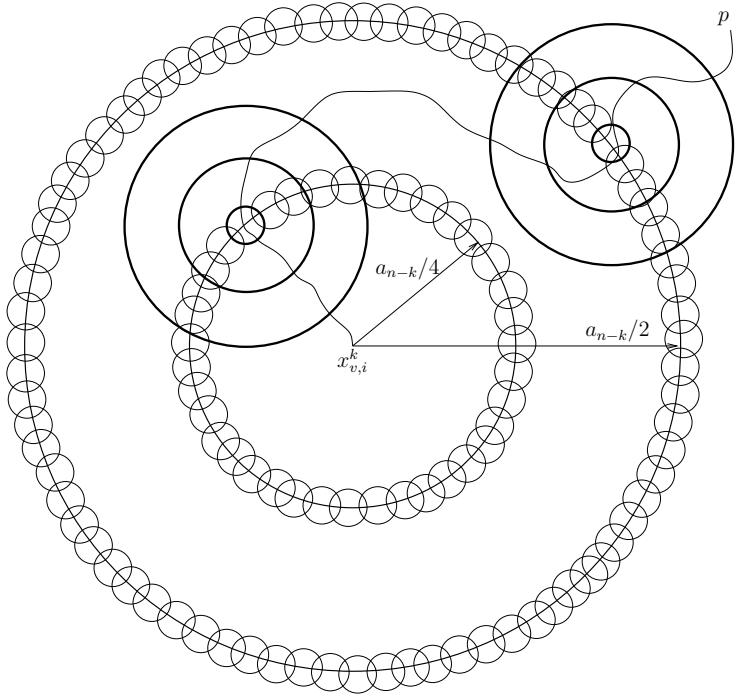}
\end{center}
\caption{Illustration of the construction of the array $((x^k_{v,i})_{v \in \{1,2\}^k, i \in {\mathcal I}_k})_{0 \leq k \leq n-1}$. The sphere of radius $a_{n-k}/4$ centered at $x^k_{v,i}$ is covered by balls of radius $1/4$ with centers $x^{k+1}_{(v,1),i'}$ and the sphere of radius $a_{n-k}/2$ is covered by balls with centers $x^{k+1}_{(v,2),i'}$. Each of the vertices $x^{k+1}_{(v,1),i'}$ and $x^{k+1}_{(v,2),i'}$ is then taken as the center of balls of radii $a_{n-k-1}/4$ and $a_{n-k-1}/2$, illustrated for the two small balls crossed by the path $p$.} \label{f:renormalization}
\end{figure}
This construction is illustrated in Figure~\ref{f:renormalization}. By \eqref{e:mkprop}, we can additionally choose the centers in \eqref{e:cov} such that $x^{k+1}_{(v,1),i'} \in \partial B(x^k_{v,i},a_{n-k}/4)$ and $x^{k+1}_{(v,2),i'} \in \partial B(x^k_{v,i},a_{n-k}/2)$. In particular, for any $i, i' \in {\mathcal I}_n$, $v, v' \in \{1,2\}^n$ and $1 \leq k \leq n$,
\begin{equation}
\label{e:Bdist}
\begin{split}
&\text{if } v_1=v'_1, v_2=v'_2, \ldots, v_{k-1}=v'_{k-1}, v_k \neq v'_k, \text{ then }\\
& |x^n_{v,i} - x^n_{v',i'}| \geq \frac{a_{n-k+1}}{4} - \sum_{l=k}^{n-1} a_{n-l} \geq \frac{a_{n-k+1}}{4} - (n-k) a_{n-k}\\
&\qquad \geq \frac{a_{n-k+1}}{4} - \frac{(\sqrt{a_0})^{(3/2)^{n-k}} a_{n-k}}{8} = \frac{a_{n-k+1}}{8}.
\end{split}
\end{equation}
For any $0 \leq k \leq n$, $v \in \{1,2\}^k$ and $i \in {\mathcal I}_k$, we define the associated event ${\mathcal C}^k_{v,i}$ as the event that there is a vacant path connecting $B(x^k_{v,i},1/4)$ to $\partial B(x^k_{v,i},a_{n-k})$:
\begin{equation*}
\begin{split}
{\mathcal C}^0 &= \left\{ B(0,1/4) \stackrel{\mathcal V}{\longleftrightarrow} \partial B(0,a_n) \right\}, \\
{\mathcal C}^k(v,i) &= \left\{ B(x^k_{v,i},1/4) \stackrel{\mathcal V}{\longleftrightarrow} \partial B(x^k_{v,i},a_{n-k}) \right\} \text{, for } 1 \leq k \leq n.
\end{split}
\end{equation*}
Note that, whatever the choice of $n \geq 1$, we have
\begin{align}
\label{e:cincl}
\{ 0 \vcon \infty \}  \subseteq {\mathcal C}^0.
\end{align}
Moreover, the properties \eqref{e:cov} allow us to deduce that, whenever ${\mathcal C}^0$ occurs, so must many of the events ${\mathcal C}^n(v,i)$. In order to state such an inclusion, we now introduce another index set
\begin{align}
\label{e:I+}
{\mathcal I}^+_k = \{1,2, \ldots, m_1\}^2 \times \{1,2, \ldots, m_2\}^{2^2} \times \cdots \times \{1,2,\ldots, m_k\}^{2^k},
\end{align}
for $1 \leq k \leq n$.
We denote elements of ${\mathcal I}^+_k$ by
\begin{align}
\label{e:I}
I = \left( (i_{v_1})_{v_1 \in \{1,2\}}, (i_{v_1,v_2})_{v_1,v_2 \in \{1,2\}}, \ldots, (i_{v_1,v_2, \ldots v_k})_{v_1, v_2, \ldots, v_k \in \{1,2\}} \right) \in {\mathcal I}^+_k,
\end{align}
where $i_{v_1, \ldots, v_l} \in \{1, \ldots, m_l\}$ for $1 \leq l \leq k$. For any $I \in {\mathcal I}^+_k$ and $v \in \{1,2\}^k$, we define, using the notation in \eqref{e:I},
\begin{align}
\label{e:Iv}
I_v = (i_{v_1}, i_{v_1,v_2}, \ldots, i_{v_1, v_2, \ldots, v_k}) \in {\mathcal I}_k.
\end{align}
The following lemma asserts that, for any $1 \leq k \leq n$, if ${\mathcal C}^0$ occurs, then there are numbers $i_1, i_2 \in \{1, \ldots, m_1\}$, $i_{1,1}, i_{1,2}, i_{2,1}, i_{2,2} \in \{1, \ldots, m_2\}$, \ldots, $(i_{v_1, \ldots, v_k} \in \{1, \ldots, m_k\})_{v_1, \ldots, v_k \in \{1,2\}}$, such that for any $v \in \{1,2\}^k$, the event ${\mathcal C}^k (v, I_v)$ occurs.

\begin{lemma}
\label{l:C}
For any $1 \leq k \leq n$,
\begin{equation}
\label{e:C}
\begin{split}
{\mathcal C}^0 \subseteq & \bigcup_{I \in {\mathcal I}^+_k} \bigcap_{v \in \{1,2\}^k} {\mathcal C}^k(v,I_v).
\end{split}
\end{equation}
\end{lemma}

\begin{proof}[Proof of Lemma~\ref{l:C}.]
We proceed by induction on $k$. For $k=1$, we observe that, if ${\mathcal C}^0$ occurs, then there is a continuous path $p$ in the vacant set connecting $B(0,1/4)$ to $\partial B(0,a_n)$. By continuity, $p$ must pass through $\partial B(0,a_n/4)$ and $\partial B(0,a_n/2)$. Hence, by \eqref{e:cov} with $k=0$, there are indices $i_1$ and $i_2$, such that $p$ passes through $B(x^1_{1,i_1},1/4)$ and $B(x^1_{2,i_2},1/4)$ (see Figure~\ref{f:renormalization}). Since $a_n \geq 2 a_{n-1}$ by \eqref{e:a} and \eqref{e:an}, this implies in particular that $p$ also visits $\partial B(x^1_{1,i_1},a_{n-1})$ and $\partial B(x^1_{2,i_2},a_{n-1})$. In other words, ${\mathcal C}^1 (1, i_1)$ and ${\mathcal C}^1 (2, i_2)$ both occur. This completes the proof for the case $k=1$.

Now assume that \eqref{e:C} holds for some $k < n$. Then numbers $$i_1, i_2, i_{1,1}, i_{1,2}, \ldots$$ with at most $k$ indices can be chosen as above such that the event ${\mathcal C}^k (v, I_v)$ occurs for any $v \in \{1,2\}^k$. Let us now fix any such $v$. Using the same argument as before, if ${\mathcal C}^k (v, I_v)$ occurs, then there exists a continuous vacant path $p$ connecting $B(x^k_{v,I_v}, 1/4)$ to $\partial B(x^k_{v,I_v}, a_{n-k})$, and by continuity and \eqref{e:cov}, this path must also connect $B(x^{k+1}_{(v,1),(I_v,i_{v,1})},1/4)$ to $\partial B(x^{k+1}_{(v,1),(I_v,i_{v,1})}, a_{n-k-1})$ and $B(x^{k+1}_{(v,2),(I_v,i_{v,2})},1/4)$ to $\partial B(x^{k+1}_{(v,2),(I_v,i_{v,2})}, a_{n-k-1})$ for appropriately chosen $i_{v,1}, i_{v,2}  \in \{1, \ldots, m_{k+1}\}$. In other words, for any $v \in \{1,2\}^k$, we can choose numbers $i_{v,1}, i_{v,2}  \in \{1, \ldots, m_{k+1}\}$ such that ${\mathcal C}^{k+1}((v,1),I_{(v,1)})$ and ${\mathcal C}^{k+1}((v,2),I_{(v,2)})$ occur. This proves the statement with $k$ replaced by $k+1$ and thus completes the proof of Lemma~\ref{l:C}.
\end{proof}

Let us now fix any $I \in {\mathcal I}^+$ as in \eqref{e:I}.
For any $v \in \{1,2\}^n$ we then define the set $L_v^- \subset {\mathbb L}$ as the set of lines whose cylinders intersect $B(x^n_{v,I_v},1/4)$ and at least one of the other balls $(B(x^n_{w,I_w},1/4))_{w \in \{1,2\}^n \setminus \{v\}}$:
\begin{align}
\label{e:L-}
L_v^- = \bigcup_{w \in \{1,2\}^n \setminus \{v\}} L_{B(x^n_{v,I_v},1/4), B(x^n_{w,I_w},1/4)},
\end{align}
and $L_v \subset {\mathbb L}$ as the set of lines whose cylinders intersect $B(x^n_{v,I_v},1/4)$, but none of the other balls $B(x^n_{w,I_w},1/4)_{w \in \{1,2\}^n \setminus \{u\}}$:
\begin{align}
\label{e:L} L_v = L_{B(x^n_{v,I_v},1/4)} \setminus L_v^- .
\end{align}
We now bound the intensity measure of $L_v^-$.

\begin{lemma}
\label{l:nuL-}
\begin{align}
\mu (L_v^-) \leq \frac{c_3}{2}.
\end{align}
\end{lemma}

\begin{proof}[Proof of Lemma~\ref{l:nuL-}.]
Suppose that $w \in \{1,2\}^n$ is different from $v$. If the first
coordinate at which $w$ and $v$ differ is $k \in \{1,\ldots, n\}$,
then there are at most $2^{n-k+1}$ possible choices for $w$ and by
\eqref{e:Bdist}, $|x^n_{v,I_v}-x^n_{w,I_w}| \geq a_{n-k+1}/8$. Hence we obtain from Lemma~\ref{l:nubd} (with $\alpha=a_{n-k+1}/8$
and $r=1/4$) that
{\allowdisplaybreaks
\begin{align*}
\mu (L_v^-) &\leq \sum_{k=1}^n 2^{n-k+1} c_2\left(\frac{(5/4)^2}{a_{n-k+1}/16} \right)^{d-1} \\
&= c_2 5^{2(d-1)} \sum_{l=1}^n  2^l \left( \frac{1}{a_0^{(3/2)^l}} \right)^{d-1}\\
&\leq c_2 5^{2(d-1)} \sum_{l=1}^\infty \left( \frac{2}{ a_0^{d-1}} \right)^l, \text{ using that } (3/2)^l \geq l, \\
&= \frac{2 c_2 5^{2(d-1)}}{a_0^{d-1}-2} \leq \frac{c_3}{2},
\text{ using } \eqref{e:a}. \qedhere
\end{align*}
}
\end{proof}
For $\omega \in \Omega$ and $L_v$ defined in \eqref{e:L}, we set
\begin{align}
\label{e:omegaui}
\omega_v = \omega \mathbf{1}_{L_v}, \text{ for } v \in \{1,2\}^n.
\end{align}
Since the sets $(L_v)_{v \in \{1,2\}^n}$ are disjoint, the processes $(\omega_v)_{v \in \{1,2\}^n}$ are independent and we have
\begin{align}
\label{e:decomp}
\sum_{v \in \{1,2\}^n } \omega_v \leq \omega .
\end{align}
In particular, if we define the event
\begin{align*}
{\tilde {\mathcal C}}^n_v = \left\{ \omega \in \Omega:
B(x^n_{v,I_v},1/4) \stackrel{{\mathcal
V}(\omega_v)}{\longleftrightarrow} \partial B(x^n_{v,I_v},a_0)
\right\},
\end{align*}
then the events $( {\tilde {\mathcal C}}^n_v)_{v \in \{1,2\}^n}$ are independent and
\begin{align*}
{\mathcal C}^n(v,I_v) \subseteq {\tilde {\mathcal C}}^n_v, \text{ for } v \in \{1,2\}^n.
\end{align*}
Hence, we obtain from Lemma~\ref{l:C}, the fact that the cardinality of ${\mathcal
I}_k^{+}$ equals $\prod_{i=1}^{k}m_i^{2^i}$, and the union bound that
\begin{align*}
{\mathbb P}_u [ {\mathcal C}^0 ] \leq m_1^2 m_2^{2^2} \cdots m_n^{2^n} \sup_{v \in \{1,2\}^n} {\mathbb P}_u [ {\tilde {\mathcal C}}^n_v ]^{2^n}.
\end{align*}
With \eqref{e:an} and \eqref{e:m}, we hence find that
\begin{equation}
\label{e:punch1}
\begin{split}
{\mathbb P}_u [ {\mathcal C}^0 ] &\leq c_4^{2+2^2 + \cdots + 2^n} a_0^{(d-1) 2^{n+1} \sum_{1 \leq k \leq n} (3/4)^k} \sup_{v \in \{1,2\}^n} {\mathbb P}_u [ {\tilde {\mathcal C}}^n_v ]^{2^n} \\
&\leq \left( c_4^2 a_0^{6(d-1)} \right)^{2^n} \sup_{v \in \{1,2\}^n} {\mathbb P}_u [ {\tilde {\mathcal C}}^n_v ]^{2^n}.
\end{split}
\end{equation}
For any $v \in \{1,2\}^n$, let now ${\bar L}_v \subset {\mathbb L}$ be the set of lines whose cylinder covers the ball $B(x^n_{v,I_v},1/4)$,
\begin{align*}
{\bar L}_v = \left\{ l \in {\mathbb L}: C(l) \supset B(x^n_{v,I_v},1/4) \right\}.
\end{align*}
In order to compute $\mu({\bar L}_v)$, we use translation invariance of ${\mathbb P}_u$ to replace $x^n_{v,I_v}$ by $0$, then observe that $\gamma(x,\vartheta)\in {\bar L}_v$ if and only if $(x,\vartheta)\in B_{d-1}(0,3/4)\times SO_d$.
It follows that $\mu \left({\bar L}_v \right) = \lambda \left( B_{d-1}(0,3/4) \right) = c_3,$ so by Lemma~\ref{l:nuL-},
\begin{equation}
\label{e:nurem}
 \mu \left( {\bar L}_v \setminus L^-_v \right) \geq \frac{c_3}{2}.
\end{equation}
Now observe that if ${\tilde {\mathcal C}}^n_v$ occurs, then no line in ${\bar L}_v \setminus L^-_v$ can appear in the process, i.e.
\begin{align*}
 {\tilde {\mathcal C}}^n_v \subseteq \left\{\omega \in \Omega: \omega_v ({\bar L}_v \setminus L^-_v) = 0 \right\} = \left\{\omega \in \Omega: \omega ({\bar L}_v \setminus L^-_v) = 0 \right\},
\end{align*}
using that ${\bar L}_v \setminus L^-_v \subseteq L_v$ for the equality.
Hence, for any $v \in \{1,2\}^n$,
\begin{align*}
 {\mathbb P}_u [  {\tilde {\mathcal C}}^n_v ] \leq {\mathbb P}_u \left[  \omega ({\bar L}_v \setminus L^-_v) = 0  \right] = e^{-u \mu \left( {\bar L}_v \setminus L^-_v \right)} \leq e^{-u c_3/2}, \text{ by } \eqref{e:nurem}.
\end{align*}
Inserting this bound into \eqref{e:punch1}, we deduce with \eqref{e:cincl} that for all $n \geq 1$,
\begin{align*}
\theta(u) \leq {\mathbb P}_u \left[ {\mathcal C}^0 \right] &\leq \left( c_4^2 a_0^{6(d-1)} \right)^{2^n} e^{- u 2^n c_3/2} \stackrel{\eqref{e:largelambda}}{=} 2^{-2^n}.
\end{align*}
Letting $n$ tend to infinity, we obtain Theorem~\ref{t:high}.
\end{proof}

\section{Percolation for low intensities}\label{s:low}

In this section we prove assertion \eqref{mtm2} stating that for any $d\ge 4$ and
$u>0$ chosen small enough, ${\mathcal V}$ percolates
almost surely. In fact, we will show the stronger statement that even ${\mathcal V} \cap {\mathbb R}^2$ percolates in this regime (cf.~\eqref{e:perc2}, \eqref{e:perc2inc}). Again, we will use a renormalization scheme inspired by \cite{Szn09}. We also remark that there are some similarities between our proof and the methods used in \cite{JBG08} to prove that there is no percolation at low intensities in the covered set of the Poisson Boolean model with balls of unbounded radii that satisfy a moment condition.  The difficulties prohibiting the use of standard percolation techniques have been mentioned in Remark~\ref{r:non-domination} (2) above.

\begin{theorem}\label{lowintthm}
For any $d\ge 4$, there exists $c> 0$ such that
\begin{equation}
\P_{u}[\textup{Perc}_2]=1, \text{ for } u \in [0,c]
\end{equation}
\end{theorem}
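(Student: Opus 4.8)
\section*{Proof proposal}

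The plan is to combine planar duality with a multi-scale (renormalisation) scheme in the spirit of the proof of Theorem~\ref{t:high}. By Corollary~\ref{c:0-1} it suffices to show $\mathbb P_u[\textup{Perc}_2]>0$, hence $\mathbb P_u\big[0\stackrel{{\mathcal V}\cap{\mathbb R}^2}{\longleftrightarrow}\infty\big]>0$, for all small $u$. Fix a rapidly growing sequence $(a_n)$ and the annuli $A_n=\overline{B(0,a_n)}\setminus B(0,a_{n-1})$. By planar duality inside each annulus, the event that $0\in{\mathcal V}$ and that every $A_n$ contains both a vacant circuit around $0$ and a vacant radial crossing is enough to build an unbounded vacant path, so that $\{0\stackrel{{\mathcal V}\cap{\mathbb R}^2}{\longleftrightarrow}\infty\}$ contains it. Thus it is enough to bound, uniformly for small $u$, the probabilities of the \emph{dual occupied events} in each $A_n$ --- an occupied circuit of ${\mathcal L}\cap{\mathbb R}^2$ surrounding $0$, and an occupied path crossing $A_n$ radially --- and to make $\sum_n$ of these probabilities small; since $\mathbb P_u[0\in{\mathcal L}]=1-e^{-u\lambda(B_{d-1}(0,1))}\to0$ as $u\to0$, this yields a positive percolation probability. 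I treat the occupied circuit, the radial crossing being entirely analogous.

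First I would bound $\mathbb P_u[\text{occupied circuit in }A_n]$ by a branching as in Lemma~\ref{l:C}. A circuit surrounding $0$ and contained in $A_n$ crosses two antipodal rays at radii in $[a_{n-1},a_n]$, giving two occupied crossings of boxes of scale $a_{n-1}$ at mutual distance of order $a_n$; and, just as a radial crossing in Theorem~\ref{t:high} forces sub-crossings through the intermediate spheres, each occupied crossing of a box of scale $a_k$ forces two occupied crossings of boxes of scale $a_{k-1}$ at distance of order $a_k$. Iterating exactly as in \eqref{e:C}, the circuit event is contained in a tree-indexed union of intersections of $2^n$ occupied crossing events at the finest, constant scale $a_0$; at that scale a crossing forces at least one cylinder to meet a bounded box, an event of probability at most $u\,\mu(L_{B(\cdot,a_0)})\le cu$, small for small $u$.

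To convert this union bound into a product I must make the $2^n$ finest-scale crossings independent, and \textbf{here lies the essential difference from, and difficulty relative to, Theorem~\ref{t:high}}. There the cylinders causing dependence could be discarded \emph{for free}, since deleting cylinders enlarges ${\mathcal V}$ and preserves \emph{vacant} crossings; for \emph{occupied} crossings deletion destroys them, so the offending cylinders cannot simply be removed. Instead I would split $\{\text{circuit}\}\subseteq\{\text{circuit realised without the dependence-creating cylinders}\}\cup\{\text{some such cylinder is present}\}$: on the first event the crossings in different branches use disjoint cylinder families and factorise, while the second is an error term estimated directly. The dependence is created precisely by cylinders whose trace in ${\mathbb R}^2$ is long enough to join two distant boxes or to cross a whole box unaided --- i.e.\ cylinders whose axis is nearly parallel to the plane, the same feature responsible for the long-range dependence \eqref{e:infrangedep}. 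The crucial computation is that the $\mu$-measure of lines whose cylinder crosses a box of side $a_k$ from one side to the opposite is of order $a_k^{\,3-d}$: the admissible near-parallel directions form a neighbourhood of the great circle $S^1=S^{d-1}\cap{\mathbb R}^2$, of codimension $d-2$, contributing a factor $a_k^{-(d-2)}$ (for angles $\lesssim a_k^{-1}$, needed for trace length $\gtrsim a_k$), times a translational factor of order $a_k$. This tends to $0$ as $a_k\to\infty$ \emph{exactly when $d\ge4$}, and is bounded below for $d=3$, in agreement with Proposition~\ref{p:triangles}.

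The remaining, and most delicate, step is to close the resulting recursion, which crudely takes the form $p_k\le M_k\,p_{k-1}^2+c\,u\,a_k\,a_{k-1}^{\,3-d}$, where $p_k$ is the long-cylinder-free crossing probability at scale $a_k$ and $M_k$ counts the $O\big((a_k/a_{k-1})^2\big)$ admissible sub-box positions in the plane. Writing $a_k=a_{k-1}^{\gamma}$ and testing the ansatz $p_k\le a_k^{-s}$, one finds it consistent provided $d\ge 3+\gamma^2/(2-\gamma)$, whose infimum over $\gamma>1$ is $4$; so the scheme closes for every $d\ge4$ (with $d=4$ borderline, attained as $\gamma\downarrow1$, and hence requiring the sharpest form of the estimates). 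This gives $\mathbb P_u[\text{circuit in }A_n]\le\rho^{2^n}$ with $\rho=\rho(u)<1$ for small $u$, so $\sum_n$ is arbitrarily small; combined with $\mathbb P_u[0\in{\mathcal L}]\to0$ it yields $\mathbb P_u\big[0\stackrel{{\mathcal V}\cap{\mathbb R}^2}{\longleftrightarrow}\infty\big]>0$, and Corollary~\ref{c:0-1} upgrades this to $\mathbb P_u[\textup{Perc}_2]=1$. I expect the hard part to be exactly the control of the near-parallel ``long-trace'' cylinders: they cannot be deleted without cost, they carry the infinite-range dependence of the model, and it is their borderline scarcity at $d=4$ that both makes the theorem true there and makes that dimension the most demanding.
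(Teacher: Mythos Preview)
Your approach is essentially the paper's: bound the probability $p_n$ of an occupied radial crossing by a one-step recursion
\[
p_n \;\le\; c\Big(\tfrac{a_n}{a_{n-1}}\Big)^2\bigl(p_{n-1}^2+\text{error}\bigr),
\]
where the error is the probability that a single cylinder meets both sub-boxes and hence couples them. You correctly isolate the key difference from Theorem~\ref{t:high}: here the offending cylinders cannot be deleted for free, so they appear as an additive error rather than being absorbed into a lowered intensity.

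Two concrete issues, though. First, your error term and the resulting constraint are off. The relevant quantity is the $\mu$-measure of lines whose cylinder meets \emph{two specified} squares of side $a_{n-1}$ at distance $\sim a_n$; this is $\le c\,a_{n-1}^2/a_n^{d-1}$ (Lemma~\ref{squarest}, obtained by covering each $\partial S(x_i,a_{n-1})$ with $\sim a_{n-1}$ unit balls and applying Lemma~\ref{l:nubd} pairwise). After multiplication by $M_n=(a_n/a_{n-1})^2$ the total error is $cu\,a_n^{3-d}$. With $a_n=a_{n-1}^{\gamma}$, the ansatz $p_n\le a_n^{-s}$ then closes for $2(\gamma-1)/(2-\gamma)\le s\le d-3$, i.e.\ for any $\gamma<2(d-2)/(d-1)$; in particular $d=4$ is \emph{not} borderline --- any $\gamma<4/3$ works, and the paper takes $\gamma=3.8/3$. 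Your stated error $cu\,a_k a_{k-1}^{3-d}$ (which does not even match your own heuristic $a_k^{3-d}$) gives the constraint $d\ge 3+\gamma^2/(2-\gamma)$, which is never satisfied at $d=4$ for any $\gamma>1$; the infimum $4$ is not attained. So what is needed is not ``the sharpest form of the estimates'' at $\gamma\downarrow 1$, but the sharper per-pair bound of Lemma~\ref{squarest} rather than the cruder ``long-trace'' count.

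Second, with an additive error the recursion produces power-law decay $p_n\le a_n^{-s}$, not $\rho^{2^n}$; the doubly-exponential rate is the signature of the error-free tree iteration of Theorem~\ref{t:high}, which is precisely what you have just argued is unavailable here. This is harmless for the conclusion, since $\sum_n a_n^{-s}<\infty$. The paper's reduction is also simpler than yours: no need for vacant circuits and radial crossings in every annulus --- a single occupied circuit around $0$ must cross the positive real axis in some $[a_{k-1},a_k]$, forcing one event $A_{k-1}(x)$, and summing $\sum_k (a_k/a_{k-1})\,p_{k-1}$ suffices.
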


The proof of Theorem~\ref{lowintthm} will follow from Lemma~\ref{squarest}, Proposition~\ref{keycontrol} and Lemma~\ref{indlemma} below, with the key control appearing in Proposition~\ref{keycontrol}.
We identify $\plane$ with the set of points $(x_1,...,x_d)\in{\mathbb
R}^d$ for which $x_3=...=x_d=0$ and ${\mathbb R}$ with the set of
points $(x_1,...,x_d)\in{\mathbb R}^d$ for which $x_2=...=x_d=0$.
In this section, we define
\begin{equation}
\label{consts}
a_0 \in [c,\infty)\mbox{ and }
c_5= \frac{3.8}{3},
\end{equation}
where the dimension-dependent constant $c = (2c_8)^{10}>0$ will be chosen later. Let
\begin{equation}
\label{rdef}a_n=a_0^{c_5^n}\mbox{, }n\ge 1.
\end{equation}
In particular, we have
\begin{equation}\label{req1}
a_n=a_{n-1}^{c_5},\mbox{ }n\ge 1.
\end{equation}
Let $S(x,t)\subset \plane$ be the $2$-dimensional closed
$l_{\infty}$-ball of radius $t\ge 0$, centered at $x\in \plane$.
For $n\ge 0$, $x,y\in \plane$, let $L_{x,y,n}^-\subset{\mathbb
L}$ be the set of lines whose associated cylinders intersect
$S(x,a_{n})$ as well as $S(y,a_{n})$:
\begin{align}
\label{L-}
L_{x,y,n}^-=L_{S(x,a_{n}),S(y,a_{n})}.
\end{align}
Moreover, for $x,y\in \plane$, $n\ge 0$, let
$L_{x,y,n}^+\subset{\mathbb L}$ be the set of lines whose
cylinders intersect $S(x,a_n)$, but not $S(y,a_n)$:
\begin{align}
 \label{L+}
L^+_{x,y,n}=L_{S(x,a_n)} \setminus L_{x,y,n}^-.
\end{align}
For $n\ge
0$, $x\in \plane$ and $L \in {\mathcal B}({\mathbb L})$, we define the event
\begin{align*}
A_n(x,L)= \Big\{ &\omega \in \Omega: S(x,a_n/10) \text{ is connected to } \partial
S(x,a_n)\\
& \text{ by a path in } {\mathcal
L}(\omega \mathbf{1}_L)\cap {\mathbb R}^2 \Big\},
\end{align*}
and set
\begin{align*}
 A_n(x) = A_n(x, {\mathbb L}).
\end{align*}

Let us check that the event $A_n(x,L)$ is indeed measurable.

\begin{lemma}For $n\ge
0$, $x\in \plane$ and $L \in {\mathcal B}({\mathbb L})$,
\label{l:Ameas}
 \begin{align}
  \label{e:Ameas}
A_n(x,L) \in {\mathcal A}.
 \end{align}
\end{lemma}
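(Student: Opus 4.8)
The plan is to characterize $A_n(x,L)$ combinatorially in terms of chains of overlapping cylinders and then read off measurability from the point process. Note first that $\lcon$ refers here to connection in the \emph{occupied} set ${\mathcal L}(\omega\mathbf 1_L)$, which is closed; so, in contrast to Lemma~\ref{l:meas}, I cannot approximate a connecting path from inside by rational polygonal paths contained in the set. Instead I would exploit that, locally, ${\mathcal L}(\omega\mathbf 1_L)\cap\plane$ is a finite union of convex sets. As a first reduction, if some path in ${\mathcal L}(\omega\mathbf 1_L)\cap\plane$ joins $S(x,a_n/10)$ to $\partial S(x,a_n)$, then stopping it at its first visit to $\partial S(x,a_n)$ yields such a path inside the compact square $S(x,a_n)$; hence $A_n(x,L)$ equals the event that the compact set $F:=\mathcal L(\omega\mathbf 1_L)\cap\plane\cap S(x,a_n)$ contains a path from $S(x,a_n/10)$ to $\partial S(x,a_n)$. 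By local finiteness of $\omega$ and relative compactness of $L_{S(x,a_n)}$ in ${\mathbb L}$, only finitely many cylinders meet $S(x,a_n)$, so $F=\bigcup_i F_i$ with each $F_i=C(l_i)\cap\plane\cap S(x,a_n)$ a compact \emph{convex} set, indexed by $l_i\in\supp(\omega\mathbf 1_L)\cap L_{S(x,a_n)}$.

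Next I would establish the combinatorial characterization: $A_n(x,L)$ occurs if and only if there is a finite chain $l^{(0)},\dots,l^{(m)}\in\supp(\omega\mathbf 1_L)\cap L_{S(x,a_n)}$ with $l^{(0)}\in L_{S(x,a_n/10)}$, $l^{(m)}\in L_{\partial S(x,a_n)}$, and $C(l^{(t)})\cap C(l^{(t+1)})\cap S(x,a_n)\neq\emptyset$ for $0\le t<m$. For the forward implication, a connecting path $p$ makes $\{p^{-1}(F_i)\}$ a cover of the connected space $[0,1]$ by finitely many closed sets; grouping the indices by the connected components of the intersection graph (an edge between $i,j$ iff $F_i\cap F_j\neq\emptyset$) partitions $[0,1]$ into finitely many disjoint closed sets, so a single component must cover $[0,1]$, which links the piece through $p(0)\in S(x,a_n/10)$ to the piece through $p(1)\in\partial S(x,a_n)$ and produces the chain. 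The converse holds because each $F_i$ is convex, hence path-connected, consecutive pieces share a point, and $S(x,a_n/10),\partial S(x,a_n)\subseteq S(x,a_n)$, so the pieces concatenate into a path in $F$.

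For measurability I would fix $m\ge 0$ and let $D_m\subseteq{\mathbb L}^{m+1}$ be the set of chains satisfying the above conditions. The membership constraints use the Borel sets $L_{S(x,a_n)}$, $L_{S(x,a_n/10)}$, $L_{\partial S(x,a_n)}$ (measurable as explained below \eqref{e:L_A}), and the pairwise constraint is Borel because $(l,l')\mapsto\min_{z\in S(x,a_n)}\big(d(\{z\},l)\vee d(\{z\},l')\big)$ is continuous and the constraint is its sublevel set at height $1$. Thus each $D_m$ is Borel, and I can write
\[
A_n(x,L)=\bigcup_{m\ge0}\Big\{\omega\in\Omega:\ \big(\omega\mathbf 1_L\big)^{\otimes(m+1)}(D_m)\ge1\Big\}.
\]
Each event on the right lies in $\mathcal A$ by the standard fact that $\omega\mapsto\int_{{\mathbb L}^{m+1}}\mathbf 1_{D_m}\,d(\omega\mathbf 1_L)^{\otimes(m+1)}$ is $\mathcal A$-measurable (verified on product indicators $\mathbf 1_{A_0\times\cdots\times A_m}$, where the integral equals $\prod_{t}(\omega\mathbf 1_L)(A_t)$, and extended by a monotone class argument), so the countable union yields $A_n(x,L)\in\mathcal A$.

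I expect the main obstacle to be the combinatorial equivalence itself, i.e.\ justifying both directions rigorously for a closed finite union of convex sets (the closed-cover connectedness argument and the convex concatenation), together with confirming the Borel-measurability of the pairwise intersection condition on pairs of lines; by comparison, the reduction to the compact square and the point-process measurability are routine.
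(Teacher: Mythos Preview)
Your proof is correct and takes a genuinely different route from the paper's. The paper does not work with the closed set $\mathcal L(\omega\mathbf 1_L)\cap\plane$ directly; instead it inflates everything by $1/m$, observing that two of the finitely many closed convex pieces meet if and only if their open $1/m$-neighborhoods meet for all $m\ge 1$. This reduces the problem to connections through an \emph{open} set, where rational polygonal paths suffice, and the event is then written as a countable combination of elementary events of the form $\{e_{C(p_i)}(\omega\mathbf 1_L)\ge 1\}$, using only evaluation maps.

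Your approach stays with the closed pieces and extracts a combinatorial chain characterization: $A_n(x,L)$ occurs exactly when there is a finite sequence of cylinders in $\supp(\omega\mathbf 1_L)$ whose planar slices inside $S(x,a_n)$ overlap consecutively and link $S(x,a_n/10)$ to $\partial S(x,a_n)$. The forward direction via the closed-cover argument on $[0,1]$ and the converse via convex concatenation are both sound, and your Borel description of the chain set $D_m\subset\mathbb L^{m+1}$ (in particular the continuity of $(l,l')\mapsto\min_{z\in S(x,a_n)}\big(d(\{z\},l)\vee d(\{z\},l')\big)$) is correct. The final step, measurability of $\omega\mapsto(\omega\mathbf 1_L)^{\otimes(m+1)}(D_m)$ via a monotone class argument from measurable rectangles, is standard. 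What you gain is a direct structural description of the event without the $1/m$-inflation trick or rational approximation; what the paper gains is that it never leaves the generating family of $\mathcal A$ (evaluation maps and countable set operations) and so needs no auxiliary measurability lemma for product point measures.
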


\begin{proof}
By local finiteness of $\omega$, the set $({\mathcal L}(\omega \mathbf{1}_L) \cap S(x,a_n)) \cup S(x,a_n/10)$ is a union of finitely many closed convex sets. Any two of these sets intersect each other if and only if their open $1/m$-neighborhoods intersect each other for all $m \geq 1$. Hence,
\begin{align*}
  A_n(x,L) = \bigcap_{m \geq 1} \Big\{ &\omega \in \Omega: S(x,a_n/10)^{1/m} \text{ is connected to }
(S(x,a_n)^c)^{1/m}\\
& \text{ by a path in } {\mathcal
L}(\omega \mathbf{1}_L)^{1/m} \cap {\mathbb R}^2 \Big\},
\end{align*}
Since ${\mathcal L}(\omega \mathbf{1}_L)^{1/m} \cup S(x,a_n/10)^{1/m}$ is a union of open convex sets, a path from $S(x,a_n/10)^{1/m}$ to $(S(x,a_n)^c)^{1/m}$ in ${\mathcal L}(\omega \mathbf{1}_L)^{1/m} \cap {\mathbb R}^2$ exists if and only if there exists a rational polygonal path (see Lemma~\ref{l:meas} for the definition) in ${\mathcal L}(\omega \mathbf{1}_L)^{1/m} \cap {\mathbb R}^2$ from $S(x,a_n/10)^{1/m}$ to $(S(x,a_n)^c)^{1/m}$, such that every segment of the path is covered by the $1/m$-neighborhood of the intersection of a cylinder with ${\mathbb R}^2$. Let ${\mathcal P}_{k,z_1,z_2}$ be the countable set of rational polygonal paths consisting of $k$ segments from $z_1 \in {\mathbb Q}^2$ to $z_2 \in {\mathbb Q}^2$ in ${\mathbb R}^2$. For any $p \in {\mathcal P}_{k,z_1,z_2}$, let $p_1, \ldots, p_k$ be its $k$ segments, and let $C(p_i)$ be the open set of lines $l$ such that $p_i \subseteq (C(l)\cap {\mathbb R}^2)^{1/m}$. Then the remarks above imply that
\begin{align*}
 A_n(x,L) = &\bigcap_{m \geq 1} \bigcup_{z_1 \in S(x,a_n/10)^{1/m} \cap {\mathbb Q}^2} \bigcup_{z_2 \in (S(x,a_n)^c)^{1/m} \cap {\mathbb Q}^2} \\
&\bigcup_{k \geq 1} \bigcup_{p \in {\mathcal P}_{k,z_1,z_2}} \bigcap_{i=1}^k \{ \omega \in \Omega: e_{C(p_i)}(\omega \mathbf{1}_L) \geq 1 \},
\end{align*}
proving Lemma~\ref{l:Ameas}.
\end{proof}

By translation invariance of $\P_u$ (cf.~Remark~\ref{r:P-inv}),
\begin{equation}\label{atransinv}\P_u[A_n(x)]\mbox{ is
independent of }x,\end{equation} and we put
\begin{align}
 \label{e:pnu}
p_n(u)=\P_{u}[A_n(0)],\mbox{ }n\ge
0,\mbox{ }u\ge 0.
\end{align}
Moreover, for $L_1, L_2 \in {\mathcal B}({\mathbb L})$,
\begin{equation}\label{aincr}
\text{if } L_1 \subseteq L_2, \text{ then } A_n(x,L_1) \subseteq A_n(x,L_2).
\end{equation}
For $n\ge 1$, let
${\mathcal H}_n^1$ be a minimal collection of points on
$\partial S(0,a_n/2)$ such that
$$\partial S(0,a_n/2)\subset \bigcup_{x\in {\mathcal H}_n^1}
S(x,a_{n-1}/10),$$ and let ${\mathcal H}_n^2$ be a minimal
collection of points on $\partial S(0,3 a_n/4)$ such that
$$\partial
S(0,3 a_n/4)\subset \bigcup_{x\in {\mathcal H}_n^2}
S(x,a_{n-1}/10).$$
Note that we then have
\begin{align}
\label{e:|H|}
|{\mathcal H}_n^1| + |{\mathcal H}_n^2| \leq c \frac{a_n}{a_{n-1}}.
\end{align}
We need a preliminary lemma.
\begin{lemma}[$d \geq 3$]
\label{squarest}
Consider two (open or closed) disjoint squares $C_1$, $C_2 \subset
{\mathbb R}^2$ of side length $s\ge 1$ such that $d(C_1,C_2) = r\ge 4$. Then
\begin{align}
\mu(L_{C_1,C_2}) \leq c \frac{s^2}{r^{d-1}}.
\end{align}
\end{lemma}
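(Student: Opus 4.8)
The plan is to avoid Lemma~\ref{l:nubd} and argue by integral geometry, since the naive route fails: covering each square by $\sim s^2$ balls of radius $1$ and summing the bound of Lemma~\ref{l:nubd} over the $\sim s^4$ pairs overcounts each admissible line by a factor of order $s^2$ (a line meeting both squares typically crosses $\sim s$ of the covering balls in each square), so that approach only delivers $c\,s^4/r^{d-1}$. By the translation and rotation invariance of $\mu$ (Remark~\ref{r:P-inv}), I may assume that $C_1,C_2$ lie in the plane $\mathrm{span}(e_1,e_2)$ with $C_1=[0,s]^2\times\{0\}^{d-2}$ and $C_2=C_1+(s+r)e_1$, so that $d(C_1,C_2)=r$; a general relative position is handled by the same estimates and only shrinks the overlaps below. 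Writing $C_i^1=C_i+\overline{B(0,1)}$, the set $L_{C_1,C_2}$ consists exactly of the lines meeting both convex bodies $C_1^1$ and $C_2^1$.

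For a direction $\theta\in S^{d-1}$ let $P_\theta$ be orthogonal projection onto $\theta^\perp$, and note that a line of direction $\theta$ meets a set $K$ iff its base point in $\theta^\perp$ lies in the shadow $P_\theta K$. Unwinding $\mu=\gamma(\lambda\otimes\nu)$ and using, as in the proof of Lemma~\ref{l:nubd}, that $\nu$ pushes forward under $\vartheta\mapsto\vartheta(e_1)$ to the uniform probability measure $\sigma$ on $S^{d-1}$, I would first establish the identity
\[
\mu(L_{C_1,C_2})=\int_{S^{d-1}}\mathrm{vol}_{d-1}\!\left(P_\theta C_1^1\cap P_\theta C_2^1\right)d\sigma(\theta).
\]
Since projection commutes with Minkowski sums, $P_\theta C_i^1$ is the unit neighbourhood in $\theta^\perp$ of the parallelogram $P_\theta C_i$, and crucially $P_\theta C_2^1=P_\theta C_1^1+v$ with $v=(s+r)P_\theta e_1$.

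The core is then to estimate the overlap of the convex body $K:=P_\theta C_1^1$ with its translate $K+v$. Projecting onto the line $\mathbb R\hat v$ gives
\[
\mathrm{vol}_{d-1}\!\left(K\cap(K+v)\right)\le(\ell_\theta-|v|)_+\cdot A_\theta,
\]
where $\ell_\theta$ is the width of $K$ along $\hat v$ and $A_\theta$ is the maximal $(d-2)$-dimensional cross-section of $K$ orthogonal to $\hat v$. Using that $P_\theta C_1$ is spanned by $sP_\theta e_1$ (which is parallel to $v$) and $sP_\theta e_2$, together with the elementary identities $|P_\theta e_1|=\sqrt{1-\theta_1^2}$ and $|P_\theta e_1\wedge P_\theta e_2|=\sqrt{\theta_3^2+\cdots+\theta_d^2}=:t$, I get $|v|=(s+r)\sqrt{1-\theta_1^2}$, so that $(\ell_\theta-|v|)_+$ measures the in-plane slack and is positive only for $\theta$ in a thin set of directions aligned with $e_1$, while $A_\theta\le c\,(st/\sqrt{1-\theta_1^2}+1)$ captures the small out-of-plane extent. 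Integrating these two factors over $S^{d-1}$, the in-plane factor contributes $\sim s^2/r$ and the out-of-plane factor contributes $\sim 1/r^{d-2}$, and their product gives the asserted $c\,s^2/r^{d-1}$.

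The main obstacle is the overlap estimate together with its integration, which must be controlled uniformly in $\theta$ across two delicate regimes: (i) directions $\theta$ lying nearly in $\mathrm{span}(e_1,e_2)$, where $P_\theta C_1$ degenerates from a parallelogram to a segment and the crude product of shadow measures overestimates $\mathrm{vol}_{d-1}(K\cap(K+v))$ by a factor $\sim s$; and (ii) the regime $s\gtrsim r$, where the admissible directions are no longer confined to a small cap around $e_1$ and the spherical Jacobian must be tracked. The slab bound above is designed to survive both, because it is the decay of $(\ell_\theta-|v|)_+$ with increasing tilt — rather than the size of a single shadow — that produces the correct power $s^2$ instead of the spurious $s^3$ or $s^4$ that ruder estimates yield.
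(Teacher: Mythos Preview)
Your proposal rests on a misconception. You dismiss the covering-by-balls approach because covering the \emph{full} squares takes $\sim s^2$ balls each, giving $\sim s^4$ pairs. But the paper's proof uses exactly this approach with one extra observation you missed: any cylinder that meets both $C_1$ and $C_2$ must meet both \emph{boundaries} $\partial C_1$ and $\partial C_2$. Indeed, $C(l)\cap\mathbb{R}^2$ is convex, so if it contains a point of $C_1$ and a point of $C_2$ it contains the segment between them, which must cross $\partial C_1$ (and $\partial C_2$). Since $\partial C_i$ has perimeter $4s$, it is covered by $|\mathcal U_i|\le cs$ balls of radius $1$, so the union bound over pairs and Lemma~\ref{l:nubd} give
\[
\mu(L_{C_1,C_2})\le \sum_{B\in\mathcal U_1}\sum_{B'\in\mathcal U_2}\mu(L_{B,B'})\le cs\cdot cs\cdot c\,r^{-(d-1)}=c\,\frac{s^2}{r^{d-1}},
\]
which is the claim. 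The overcounting you worried about does not arise: a line crossing a square meets $O(1)$ balls on its boundary, not $\sim s$ of them.

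Your integral-geometric route is plausible in outline and the shadow-overlap formula is correct, but several steps are only asserted: the reduction to axis-aligned squares translated along $e_1$ is not innocuous (two squares in $\mathbb{R}^2$ need not have parallel sides, and even if they do the displacement need not be along a side), the slab bound's factor $A_\theta$ is not fully controlled, and the final spherical integration is summarized as ``contributes $\sim s^2/r$'' and ``$\sim 1/r^{d-2}$'' without computation. These gaps may well close, but the argument as written is a sketch, whereas the boundary-covering proof is three lines.
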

\begin{proof}
Let $C_1$ and $C_2$ be two squares fulfilling the assumptions of
the lemma. For $i=1,2$, let ${\mathcal U}_i$ be a minimal
collection of balls of radius $1$ centered on $\partial C_i$ such
that $\partial C_i\subset \cup_{B\in {\mathcal U}_i}B$. By
continuity, any cylinder that intersects both $C_1$ and $C_2$ must
also intersect both $\partial C_1$ and $\partial C_2$. Moreover,
any such cylinder intersects at least one ball $B\in {\mathcal
U}_1$ and at least one ball $B'\in {\mathcal U}_2$. Therefore,
using a union bound and Lemma~\ref{l:nubd}, we get
\begin{align}
\mu(L_{C_1,C_2})\le \sum_{B\in {\mathcal U}_1} \sum_{B'\in
{\mathcal U}_2}\mu(L_{B,B'})\le \sum_{B\in {\mathcal U}_1}
\sum_{B'\in {\mathcal U}_2} c r^{-(d-1)}\le c' s^2 r^{-(d-1)}.
\end{align}
Here, the third inequality uses $|{\mathcal U}_i|\le c s$, for $i=1,2$.
\end{proof}

The key estimate for the proof of Theorem~\ref{lowintthm} is
provided by the following control on the probabilities $p_n(u)$ defined in \eqref{e:pnu}. 
We can only prove \eqref{keydec} for $d\ge 4$. This is not merely a deficiency of the arguments; we shall see later in Proposition~\ref{p:triangles} that $p_n(u)$ does not decay for $d=3$.

\begin{proposition}[$d \geq 4$]
\label{keycontrol}
There exists a constant $c_6 \in (0,\infty)$ such that for any $a_0 \ge c_6$ and
$u \in (0, c_7(a_0))$ one has
\begin{equation}\label{keydec}
p_n(u)\le a_n^{(2-c_5(d-1))/2},\mbox{ }n\ge 1.
\end{equation}
\end{proposition}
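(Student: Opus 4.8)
The recursive structure mirrors the one used in Section~\ref{s:high}: a crossing of the large annulus at scale $a_n$ (the event $A_n(0)$, a path in $\mathcal L \cap \mathbb R^2$ from $S(0,a_n/10)$ out to $\partial S(0,a_n)$) forces crossings of \emph{two} sub-annuli at the smaller scale $a_{n-1}$, and these two crossings can be made (nearly) independent by discarding the lines whose cylinders intersect both sub-boxes. I would set up the induction so that $p_n(u)$ is controlled by a product of two copies of $p_{n-1}(u)$, times a combinatorial factor counting the placements of the sub-boxes, plus an error term bounding the intensity of the discarded lines.

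\textbf{Step 1 (geometric decomposition).} First I would show that if $A_n(0)$ occurs then the occupied crossing path must cross the two intermediate circles $\partial S(0,a_n/2)$ and $\partial S(0,3a_n/4)$; using the covering families $\mathcal H_n^1, \mathcal H_n^2$ this produces points $x\in\mathcal H_n^1$ and $y\in\mathcal H_n^2$ such that $A_{n-1}(x)$ and $A_{n-1}(y)$ both occur (the path, restricted to a small box around such a point, crosses from $S(\cdot,a_{n-1}/10)$ to $\partial S(\cdot,a_{n-1})$). By a union bound over the $|\mathcal H_n^1|\cdot|\mathcal H_n^2|\le c(a_n/a_{n-1})^2$ pairs (via \eqref{e:|H|}), this gives
\begin{align}
p_n(u) \le c\Big(\frac{a_n}{a_{n-1}}\Big)^2 \sup_{x,y}\P_u\big[A_{n-1}(x)\cap A_{n-1}(y)\big].
\end{align}
Here I would arrange that the two chosen boxes $S(x,a_{n-1})$ and $S(y,a_{n-1})$ are disjoint and separated by distance of order $a_n$, so that Lemma~\ref{squarest} applies.

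\textbf{Step 2 (decoupling via line removal).} Next I would decouple the joint event. Writing $L^-_{x,y,n-1}$ for the lines hitting both boxes (cf.~\eqref{L-}) and $L^+$ for those hitting only one (cf.~\eqref{L+}), I use \eqref{aincr} to pass from $A_{n-1}(x)$ to $A_{n-1}(x,L^+_{x,y,n-1})$, which depends only on cylinders missing the box around $y$, and symmetrically for $y$; these two restricted events are independent because they are measurable with respect to the Poisson process on disjoint sets of lines. The price of this replacement is the event that some line in $L^-_{x,y,n-1}$ is present, whose probability is at most $u\,\mu(L^-_{x,y,n-1}) \le c\,u\,a_{n-1}^2/a_n^{d-1}$ by Lemma~\ref{squarest} (with $s\approx 2a_{n-1}$, $r\approx a_n$). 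Combining, and using $\P_u[A_{n-1}(x,L^+)]\le p_{n-1}(u)$ together with translation invariance \eqref{atransinv},
\begin{align}
\P_u\big[A_{n-1}(x)\cap A_{n-1}(y)\big] \le p_{n-1}(u)^2 + c\,u\,\frac{a_{n-1}^2}{a_n^{d-1}}.
\end{align}

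\textbf{Step 3 (closing the induction).} Finally I would plug Step~2 into Step~1 and verify that the resulting recursion propagates the bound $p_{n-1}(u)\le a_{n-1}^{(2-c_5(d-1))/2}$ to the same bound at level $n$, using $a_n=a_{n-1}^{c_5}$ from \eqref{req1}. The main term contributes $c(a_n/a_{n-1})^2 p_{n-1}(u)^2 = c\,a_n^{2}a_{n-1}^{-2}\,a_{n-1}^{2-c_5(d-1)}$, and the error term contributes $c(a_n/a_{n-1})^2\,u\,a_{n-1}^2/a_n^{d-1}$; for both to be dominated by $\tfrac12 a_n^{(2-c_5(d-1))/2}$ one needs $a_0$ large (absorbing the constants and the loss in exponents, exploiting $c_5=3.8/3>1$) and $u$ small enough relative to $a_0$. \textbf{The hard part is exactly this bookkeeping on exponents together with establishing the base case $n=1$}: one must check that the self-improving algebra of exponents works, i.e.\ that $2 - 2 + (2-c_5(d-1))\le \tfrac{2-c_5(d-1)}{2}$ after accounting for the scale change, which is where the hypothesis $d\ge4$ enters — for $d=3$ the exponent $2-c_5(d-1)$ fails to give decay, consistent with the warning preceding the proposition. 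I would choose $c_6$ and $c_7(a_0)$ at the very end to make all the inequalities hold simultaneously, deriving the base case $p_1(u)$ directly from a crude bound on the probability that an occupied crossing of a single annulus at scale $a_1$ exists.
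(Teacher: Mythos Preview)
Your proposal is correct and follows essentially the same route as the paper: geometric decomposition via the covers $\mathcal H_n^1,\mathcal H_n^2$ to get the recursion \eqref{Aest1}, decoupling via $L^\pm$ and Lemma~\ref{squarest} to obtain \eqref{recrel}, and then the exponent bookkeeping (the paper isolates this as Lemma~\ref{indlemma}) where $d\ge 4$ is exactly the condition needed. The only minor deviation is the base case: the paper anchors the induction at $n=0$ by bounding $p_0(u)\le\P_u[\omega(L_{S(0,a_0)})\neq 0]$ and choosing $u\le c_7(a_0)$ so that $p_0(u)\le a_0^{(2-c_5(d-1))/2}$, rather than estimating $p_1(u)$ directly as you suggest; and note that \eqref{aincr} is used at the \emph{end} of the decoupling (to bound $\P_u[A_{n-1}(x,L^+)]\le p_{n-1}(u)$), while the passage from $A_{n-1}(x)$ to $A_{n-1}(x,L^+)$ is an equality on the event $\{\omega(L^-)=0\}$.
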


\begin{proof}
Observe that if $A_n(0)$ occurs, then for at least one $x_1\in
{\mathcal H}_n^1$, the event $A_{n-1}(x_1)$ happens, and for at
least one $x_2\in {\mathcal H}_n^2$, the event $A_{n-1}(x_2)$
happens (as in the proof of Lemma~\ref{l:C}, this follows from the continuity of a path connecting $S(0,a_n/10)$ and $\partial S(0,a_n)$; an illustration would look similar to Figure~\ref{f:renormalization}). Consequently, using \eqref{e:|H|},
\begin{equation}\label{Aest1}
\begin{split}
\P_u[A_n(0)]&\le \sum_{x_1\in {\mathcal
H}_n^1}\sum_{x_2\in {\mathcal
H}_n^2}\P_u[A_{n-1}(x_1)\cap A_{n-1}(x_2)]\\
&\le
c\left(\frac{a_n}{a_{n-1}}\right)^2\sup_{x_1\in {\mathcal
H}_n^1,\,x_2\in {\mathcal H}_n^2}\P_u[A_{n-1}(x_1)\cap
A_{n-1}(x_2)].
\end{split}
\end{equation}
Next, we will find a useful upper bound on
$\P_u[A_{n-1}(x_1)\cap A_{n-1}(x_2)]$.
  For $x_1\in
{\mathcal H}_n^1$, $x_2\in {\mathcal H}_n^2$, we have (recall the notation \eqref{L-})
\begin{multline}\label{aest2}
\P_u\left[\cap_{i=1}^2 A_{n-1}(x_i)\right] \le
\P_u\left[\omega\left(L^-_{x_1,x_2,n-1}\right)\neq
0\right]\\+\P_u\left[\cap_{i=1}^2
A_{n-1}(x_i)\cap\left\{\omega\left(L^-_{x_1,x_2,n-1}\right)=
0\right\}\right],
\end{multline}
and we will now find upper bounds on the two terms on the right-hand side.
Using Lemma~\ref{squarest} and the inequality $1-e^{-x}\le x$
for $x\ge 0$, we obtain
\begin{equation}\label{aest3}
\P_u\left[\omega\left(L^-_{x_1,x_2,n-1}\right)\neq
0\right]=1-\exp(-u \mu(L^-_{x_1,x_2,n-1}))\le cu
\frac{a_{n-1}^2}{a_{n}^{d-1}}.
\end{equation}
We now turn to the second term in~\eqref{aest2}. First we observe that since $L^+_{x_1,x_2,n-1}$ and  $L^+_{x_2,x_1,n-1}$ are disjoint sets of lines (cf.~\eqref{L+}), it follows that
\begin{equation}\label{aindep}
A_{n-1}(x_1,L^+_{x_1,x_2,n-1})\mbox{ and }A_{n-1}(x_2,L^+_{x_2,x_1,n-1})\mbox{ are independent.}
\end{equation}
We now get that
\begin{equation}\label{aest4}
\begin{split}
&\P_u\left[\cap_{i=1}^2
A_{n-1}(x_i)\cap\left\{\omega\left(L^-_{x_1,x_2,n-1}\right)=
0\right\}\right]\\
&=\P_u\left[\cap_{i=1}^2
A_{n-1}(x_i, L^+_{x_i,x_{3-i},n-1})\cap\left\{\omega\left(L^-_{x_1,x_2,n-1}\right)=
0\right\}\right]\\
&\le \P_u\left[\cap_{i=1}^2
A_{n-1}(x_i,L^+_{x_i,x_{3-i},n-1})\right]
\stackrel{~\eqref{aindep}}{=}\prod_{i=1}^2\P_u[A_{n-1}(x_i,L^+_{x_i,x_{3-i},n-1})]\\
&\stackrel{~\eqref{aincr}}{\le}
\prod_{i=1}^2\P_u[A_{n-1}(x_i)]\stackrel{~\eqref{atransinv}}{=}\P_u[A_{n-1}(0)]^2.
\end{split}
\end{equation}
Combining ~\eqref{Aest1}, ~\eqref{aest2}, ~\eqref{aest3},
~\eqref{aest4} we obtain
\begin{equation}
\label{recrel}
\begin{split}
p_n(u) &\le
c\left(\frac{a_n}{a_{n-1}}\right)^2\left(p_{n-1}(u)^2+cu
\frac{a_{n-1}^2}{a_{n}^{d-1}} \right)\\
&\le c_8 a_{n-1}^{2c_5-2}\left(p_{n-1}(u)^2+u
a_{n-1}^{2-c_5(d-1)}\right),
\end{split}
\end{equation}
where we have used the relation $a_n=a_{n-1}^{c_5}$ from \eqref{req1} and defined the constant $c_8>1$ in the last inequality. Under the hypothesis $d \geq 4$, we can use \eqref{recrel} and an algebraic manipulation to derive the key induction step in the following lemma:
\begin{lemma}[$d \geq 4$]
\label{indlemma}
For any $a_0 \ge (2c_8)^{10}$ \textup{(}cf.~\eqref{recrel}\textup{)} and $u\le 1$, if
\begin{equation}\label{pndec1}
p_n(u)\le a_n^{(2-c_5(d-1))/2},
\end{equation}
then also
\begin{equation}\label{pndec2}
p_{n+1}(u)\le a_{n+1}^{(2-c_5(d-1))/2}.
\end{equation}
\end{lemma}
\begin{proof}[Proof of Lemma~\ref{indlemma}.]Suppose $u\le 1$. If for some $n\ge 0$ inequality~\eqref{pndec1} holds, we get by inequality~\eqref{recrel},
\begin{equation*}
p_{n+1}(u) \le c_8 a_{n}^{2c_5-2}\left(p_{n}(u)^2+a_{n}^{2-c_5(d-1)}\right)\le
2c_8 a_{n}^{2c_5-c_5(d-1)}= 2c_8 a_{n+1}^{3-d}\le a_{n+1}^{3.1-d},
\end{equation*}
where the last inequality holds because $a_0 \ge (2c_8)^{10}$.
Furthermore, using that $c_5 = 3.8/3 < 2$ (cf.~\eqref{consts}), the inequality $3.1-d\le (2-c_5(d-1))/2$ can be written
as $d\ge (4.2-c_5)/(2-c_5) = 4$, finishing the proof of Lemma~\ref{indlemma}.
\end{proof}
With Lemma~\ref{indlemma}, we can now complete the proof of Proposition~\ref{keycontrol}. We set $a_0 \geq (2c_8)^{10}$, so that Lemma~\ref{indlemma} applies. Since
\begin{align*}
p_0(u) &\leq {\mathbb P}_u [ \omega \in \Omega: \omega(L_{B(x,a_0)}) \neq 0]\\
&= 1 - e^{-u \lambda(B_{d-1}(0,a_0+1))}, \text{ by Lemma~\ref{l:mubd0},}
\end{align*}
we can find $0< c_7(a_0) \le 1$ such that $p_0(u)\le a_0^{(2-c_5(d-1))/2}$ for every
$u\le c_7(a_0)$. Choosing such a $u$, an inductive application of Lemma~\ref{indlemma} yields
$$p_n (u)\le a_n^{(2-c_5(d-1))/2},\,n\ge 1,$$ completing the proof of
Proposition~\ref{keycontrol}.
\end{proof}

Proposition~\ref{keycontrol} now allows us to prove Theorem~\ref{lowintthm}.

\begin{proof}[Proof of Theorem~\ref{lowintthm}.]
Note that, if the event $\textup{Perc}_2^c$ occurs, then the
component of ${\mathcal V} \cap {\mathbb R}^2$ containing the
origin is empty or bounded, hence by Lemma~\ref{l:mubd0} and local
finiteness of $\omega \in \Omega$ delimited by a finite number of
ellipsoids. For every such ellipsoid, there is a surrounding path
following its boundary in the clockwise direction. By
concatenating the pieces of these paths running along the boundary
of the vacant component containing $0$, one obtains a closed curve
$\gamma$ contained in ${\mathcal L} \cap {\mathbb R}^2$
surrounding $0$. Hence, we have the inclusion
\begin{align}
 \label{fincalc0}
\textup{Perc}_2^c \subseteq \left\{ \begin{array}{c}
\mbox{there is a closed curve }\gamma\subset {\mathcal L}\cap
\plane \\
\mbox{surrounding } 0
\end{array}  \right\}.
\end{align}
For $k\ge 1$ let $s_k=\lceil 5 a_k / a_{k-1} \rceil$. For $k\ge 1$
and $i=1,...,s_k$ we define the points $x_{k,i}\in {\mathbb
R}_{+}$ as $x_{k,i}=a_{k-1}+a_{k-1}/10+(i-1)a_{k-1}/5$. We now
choose $a_0$ large and $u$ small so that~\eqref{keydec}
holds. Using \eqref{fincalc0}, we get that
\begin{equation}\label{fincalc0.1}
\begin{split}
& \textup{Perc}_2^c \cap \{ \omega \in \Omega: \omega(L_{S(0,a_0)})=0 \} \\
&\subseteq \left\{
\begin{array}{c}
\mbox{there is a closed curve }\gamma\subset {\mathcal L}\cap
(\plane\setminus S(0,a_0)) \\
\mbox{surrounding } 0
\end{array} \right\} \\
&\subseteq \bigcup_{k=1}^{\infty} \left\{
\begin{array}{c}
\mbox{there is a closed
curve }\gamma\subset {\mathcal L}\cap (\plane\setminus S(0,a_0)) \\
\mbox{surrounding }0 \mbox{ and
intersecting }[a_{k-1},a_k]e_1 \end{array} \right\}.
\end{split}
\end{equation}
By continuity of $\gamma$, the set on the right-hand side is included in $$\cup_{k=1}^\infty \cup_{i=1}^{s_k} A_{k-1}(x_{k,i})$$ (see Figure~\ref{f:peierls} for a schematic illustration),
\begin{figure}
\psfrag{0}[cc][cc][1.5][0]{$0$}
\psfrag{a0}[cc][cc][1.5][0]{$a_0$}
\psfrag{a1}[cc][cc][1.5][0]{$a_1$}
\psfrag{a2}[cc][cc][1.5][0]{$a_2$}
\psfrag{gamma}[cc][cc][1.5][0]{$\gamma$}
\begin{center}
\includegraphics[angle=0, width=0.8\textwidth]{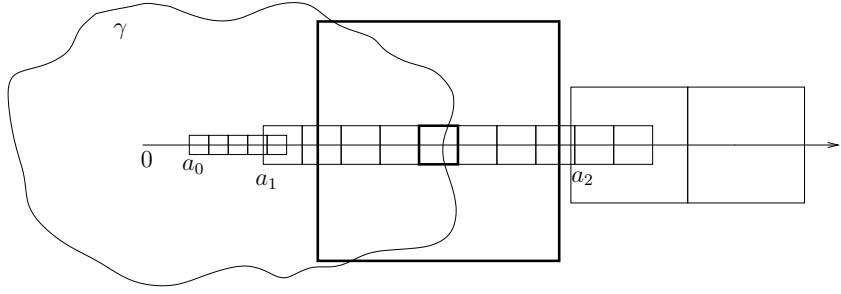}
\end{center}
\caption{A schematic illustration of the construction used in the proof of Theorem~\ref{lowintthm}. If a closed curve $\gamma$ surrounds $0$ and does not intersect $S(0,a_0)$, then one of the events $A_{k-1}(x_{k,i})$, $k \geq 1$, $1 \leq i \leq s_k$ must occur (here, $A_1(x_{2,5})$ occurs).} \label{f:peierls}
\end{figure}
hence
\begin{equation}
\label{fincalc}
\begin{split}
& {\mathbb P}_u [\textup{Perc}_2^c,\omega(L_{S(0,a_0)})=0 ]\\
&  \le \sum_{k=1}^{\infty}
\sum_{i=1}^{s_k}\P_u[A_{k-1}(x_{k,i})]\le
\sum_{k=1}^{\infty}c \frac{a_k}{a_{k-1}}\P_u[A_{k-1}(0)]\\
&\le c\sum_{k=1}^{\infty}
a_{k-1}^{c_5-1}a_{k-1}^{(2-c_5(d-1))/2}, \text{ by } \eqref{keydec}, \\
&= c \sum_{k=1}^\infty a_{k-1}^{-c_5((d-1)/2 - 1)} \leq c \sum_{k=1}^\infty a_0^{-c_5^k/2}.
\end{split}
\end{equation}
Thus,
\begin{align*}
\P_u\left[\textup{Perc}_2^c\right] &\le \P_u\left[\textup{Perc}_2^c,\,\omega(L_{S(0,a_0)})=0\right]
+\P_u[\omega(L_{S(0,a_0)})>0]
\\
&\stackrel{~\eqref{fincalc}}{\le} c a_0^{-c} + u \mu(L_{S(0,a_0)}).
\end{align*}
Choosing $a_0$ sufficiently large, and then $u$ sufficiently
small makes $c a_0^{-c}+u \mu(L_{S(0,a_0)})<1$, and
we conclude that $\P_u\left[\textup{Perc}_2^c\right]<1.$ Therefore, by the $0$-$1$-law \eqref{e:0-1-perc2},
$\P_{u}[\textup{Perc}_2]=1$ and the proof of Theorem~\ref{lowintthm} is complete.
\end{proof}

The above proof showing percolation at low intensities works for dimensions $d \geq 4$. Deciding whether or not percolation of the vacant set occurs at low intensities in dimension $3$ is currently an open problem. In order to illustrate that this case is substantially more delicate, we shall now prove that in dimension $3$, the set ${\mathcal V} \cap {\mathbb R}^2$ does not percolate. By Theorem~\ref{lowintthm}, this phenomenon provides a sharp contrast to the case $d \geq 4$.

\begin{proposition}
 \label{p:triangles}
For dimension $d=3$ and any $u>0$,
\begin{align}
 \label{e:triangles}
{\mathbb P}_u [ \textup{Perc}_2 ] = 0.
\end{align}
\end{proposition}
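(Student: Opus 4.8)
The plan is to prove the stronger statement that $\P_u[\,0 \stackrel{{\mathcal V}\cap\plane}{\longleftrightarrow}\infty\,]=0$ for every $u>0$. Since $\textup{Perc}_2=\bigcup_{x\in{\mathbb Q}^2}\{x\stackrel{{\mathcal V}\cap\plane}{\longleftrightarrow}\infty\}$ and all these events have the same probability by translation invariance (cf.~\eqref{atransinv}), this yields $\P_u[\textup{Perc}_2]=0$ immediately. To show that $0$ does not connect to $\infty$ in ${\mathcal V}\cap\plane$ almost surely, I would prove that the covered set ${\mathcal L}\cap\plane$ almost surely contains closed curves surrounding the origin at arbitrarily large scales: any vacant path from $0$ to $\infty$ would have to cross such a curve, which is impossible.

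The geometric heart of the matter, and the feature special to $d=3$, is that a line $l$ meeting the plane $\plane=\{x_3=0\}$ at a small angle $\theta$ produces a very long, thin intersection $C(l)\cap\plane$ --- a \emph{needle} of width of order $1$ and length of order $1/\theta$. Parametrising a line by its crossing point $p\in\plane$ and direction $\hat v\in S^2$, the invariant measure has the form $\mu=\mathrm{const}\cdot|\hat v_3|\,dp\,d\sigma(\hat v)$ with $|\hat v_3|=\sin\theta$. Hence, for a disk of radius $L$ in $\plane$, the $\mu$-measure of lines crossing it at angle $\theta\in(0,c/L)$ --- i.e.\ those whose needles are long enough to span the disk --- is of order $\int_{B(0,L)\cap\plane}dp\int_0^{c/L}\theta\,d\theta\sim L^2\cdot L^{-2}=O(1)$, \emph{independently of $L$}. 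This scale invariance, special to $d=3$, is precisely what breaks down for $d\ge4$, and it is the reason the decay estimate \eqref{keydec} cannot hold when $d=3$. It makes the covered set in the plane look, at every scale, like a thickened two-dimensional Poisson line process of fixed positive intensity, which is known to tile the plane into bounded cells (cf.\ the discussion of $u_*(2)=0$ in Section~\ref{s:intro}).

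Concretely, for a scale $L$ I would fix three azimuthal sectors about $0$, roughly $60^\circ$ apart, and let $G_L$ be the event that the process contains three lines whose needles (i) each have length at least $CL$ for a large dimensional constant $C$, so that their pairwise intersection points lie on all three needles, (ii) have in-plane directions in the three prescribed sectors, and (iii) pass at signed perpendicular distance in $[L/2,3L/2]$ from $0$ on the appropriate side. Two long needles with sufficiently different directions must then cross inside $B(0,cL)$, and since needles have positive width they overlap there; arranging the offsets and sides as in (iii) forces the three needles to bound a triangle around $0$, so their union contains a closed covered curve surrounding the origin. By the measure computation above, restricted to an $O(1)$ range of directions and an $O(1)$ fraction of offsets, each of the three needles has $\mu$-measure bounded below by a positive constant uniformly in $L$; as the three are supported on disjoint sets of lines they are independent, whence $\P_u[G_L]\ge p_0(u)>0$ uniformly in $L$.

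Finally, to pass from a uniform lower bound at each scale to an almost-sure conclusion, I would restrict the construction at scale $L_k$ to lines crossing $B(0,2L_k)$ in a dyadic band of angles of order $1/L_k$; along a sparse sequence, say $L_{k+1}=100\,L_k$, these bands are disjoint, so the (restricted) events $G_{L_k}$ depend on disjoint portions of the Poisson process and are independent, while still satisfying $\P_u[G_{L_k}]\ge p_0(u)$. The second Borel--Cantelli lemma then gives that infinitely many $G_{L_k}$ occur almost surely, producing covered circuits around $0$ at arbitrarily large scales and finishing the proof. I expect the main obstacle to be the second and third steps together: verifying that the admissible line configurations form a set of $\mu$-measure bounded below \emph{uniformly in $L$} --- which rests entirely on the $d=3$ scale-invariance identity --- and the geometric bookkeeping showing that three such needles genuinely close up into a circuit enclosing the origin.
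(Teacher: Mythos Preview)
Your proposal is correct and follows essentially the same strategy as the paper: at each scale produce three long ``needles'' in ${\mathcal L}\cap\plane$ with directions roughly $120^\circ$ apart that enclose the origin in a triangle, show that the probability of this event is bounded below by a positive constant independent of the scale (the $d=3$ scale-invariance you correctly identified), arrange the events at different scales to be independent, and conclude by Borel--Cantelli.

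The implementations differ only in bookkeeping. The paper fixes concrete segments $S_i^\pm(a)$ (short arcs on the three sides of a triangle of size $a$) and asks for a cylinder in each $L_{S_i^-(a),S_i^+(a)}$; the uniform lower bound $\mu(L_{S_1^-(a),S_1^+(a)})\ge c_{10}>0$ is then read off directly from Lemma~\ref{l:nubd} by tiling the segments with unit balls, which spares you the explicit $(p,\theta)$-parametrisation. For independence across scales the paper takes $a_n=3^n$ and observes that the segments $S_i^\pm(3^n)$ and $S_i^\pm(3^{n+1})$ sit at disjoint $y$-heights (separated by more than the cylinder width), so the corresponding line sets are disjoint; you instead restrict to disjoint dyadic angle bands. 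Both mechanisms work; the paper's has the advantage of plugging straight into the existing Lemma~\ref{l:nubd}, while yours makes the scale-invariance $\int_0^{c/L}\theta\,d\theta\cdot L^2=O(1)$ more transparent and explains immediately why the argument collapses for $d\ge4$.
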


\begin{remark}
 By translation and rotation invariance of ${\mathbb P}_u$, Proposition~\ref{p:triangles} implies that for any fixed two-dimensional subspace $S_2$ of ${\mathbb R}^3$ and any $x \in {\mathbb R}^d$, $\mathcal V$ does not percolate along $x+S_2$, i.e. ${\mathbb P}_u [ {\mathcal V} \cap (x+S_2)  \text{ percolates}] = 0$ for any $u>0$.
\end{remark}

\begin{proof}[Proof of Proposition~\ref{p:triangles}.]
For any $a > 0$, we define the segments
\begin{align*}
 S_1^\pm(a) = \bigg\{ \pm \frac{\sqrt{3}}{2} a \bigg\} \times \left[ - \frac{a}{2}, - \frac{a}{4} \right] \times \{0\} \subset {\mathbb R}^2 \subset {\mathbb R}^3.
\end{align*}
Crucially, any cylinder intersecting both $S_1^-(a)$ and $S_1^+(a)$ covers a line in ${\mathbb R}^2$ connecting the segments $\{ - \frac{\sqrt{3}}{2} a \} \times \left[ - \frac{a}{2}, - \frac{a}{4} \right]$ and $\{ \frac{\sqrt{3}}{2} a \} \times \left[ - \frac{a}{2}, - \frac{a}{4} \right]$.
\begin{figure}
\psfrag{S1-}[cc][cc][2.5][0]{$S_1^-(a)$}
\psfrag{S1+}[cc][cc][2.5][0]{$S_1^+(a)$}
\psfrag{S2-}[cc][cc][2.5][0]{$S_2^-(a)$}
\psfrag{S2+}[cc][cc][2.5][0]{$S_2^+(a)$}
\psfrag{S3-}[cc][cc][2.5][0]{$S_3^-(a)$}
\psfrag{S3+}[cc][cc][2.5][0]{$S_3^+(a)$}
\psfrag{0}[cc][cc][2.5][0]{$(0,0)$}
\psfrag{r}[cc][cc][2.5][0]{$(a,0)$}
\begin{center}
\includegraphics[angle=0, width=0.5\textwidth]{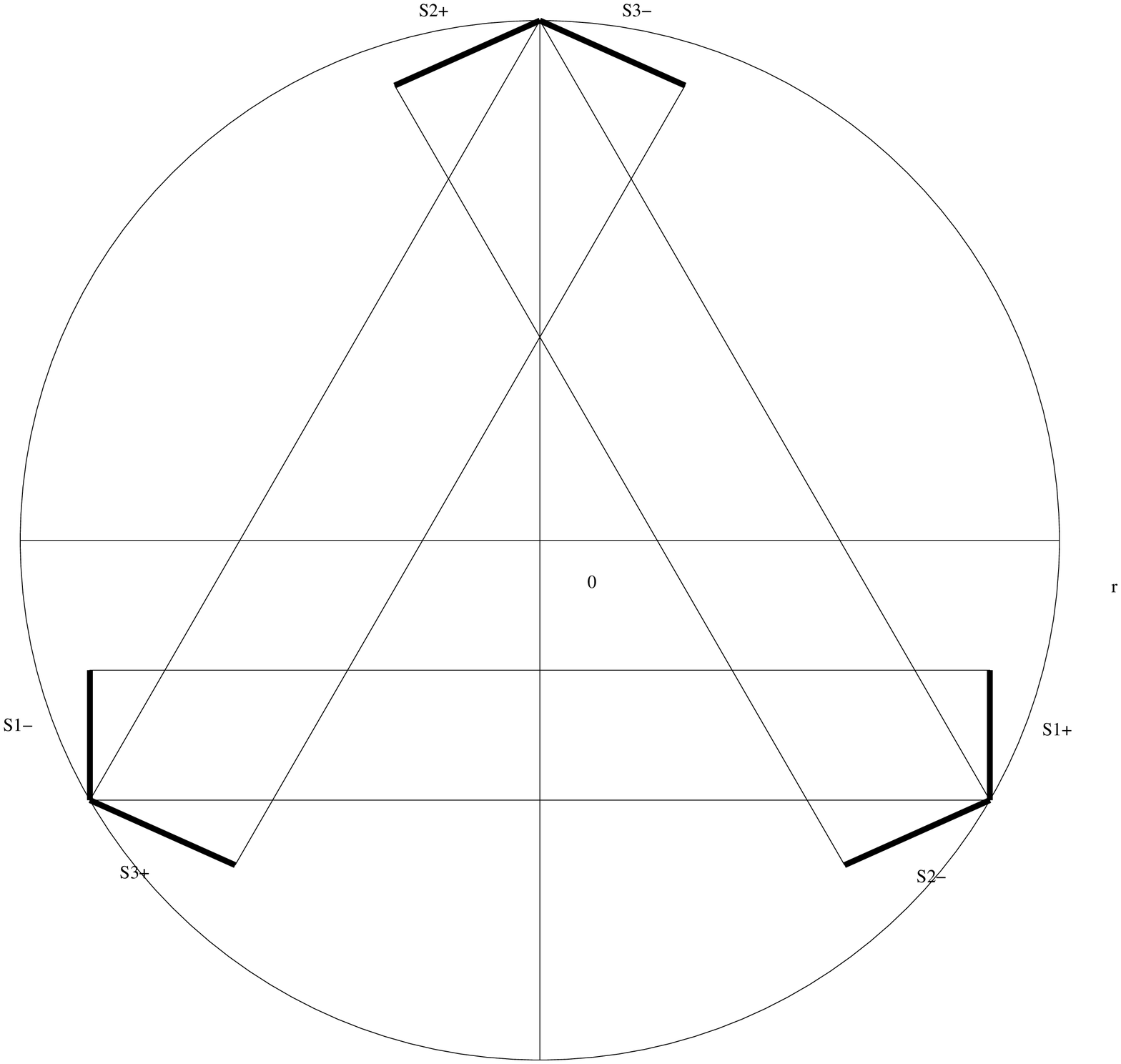}
\end{center}
\caption{An illustration of the segments $S_i^\pm(a)$, $i \in \{1,2,3\}$, defined in the proof of Proposition~\ref{p:triangles}.} \label{f:triangles}
\end{figure}
We now obtain similar rectangles $S_2^\pm(a)$ and $S_3^\pm(a)$ by rotating $S_1^\pm(a)$ by $2\pi/3$ and $4\pi/3$: For $R_{2\pi/3}$ denoting rotation of ${\mathbb R}^2$ around $0$ by the angle $2\pi/3$ in the counterclockwise direction, we set (see Figure~\ref{f:triangles} for an illustration)
\begin{align*}
 S_2^\pm(a) = R_{2\pi/3} S_1^\pm(a), \\
 S_3^\pm(a) = R_{2\pi/3} S_2^\pm(a).
\end{align*}
Then we define the event $\Delta_a$ as the event that a cylinder intersecting both $S_i^-(a)$ and $S_i^+(a)$ occurs for all $i \in \{1,2,3\}$:
\begin{align*}
\Delta_a = \bigcap_{i=1}^3 \big\{ \omega \in \Omega: \omega(L_{S_i^-(a), S_i^+(a)}) \geq 1 \big\}.
\end{align*}
Observe that
\begin{align}
\label{triangles1}
 \big\{ \{0\} \stackrel{{\mathcal V} \cap {\mathbb R}^2}{\longleftrightarrow} \infty \big\} \subseteq \Omega \setminus \Delta_a, \text{ for all } a > 0.
\end{align}
Indeed, if $\Delta_a$ occurs, then the cylinders associated to any
triple of lines in $L_{S_1^-(a), S_1^+(a)} \times L_{S_2^-(a),
S_2^+(a)} \times L_{S_3^-(a), S_3^+(a)}$ cover the lines of a
triangle in ${\mathbb R}^2$ whose interior contains $(0,0)$
(cf.~Figure~\ref{f:triangles}). For $a$ larger than some constant $c$, we can fix equally spaced points
$x_j^- = (-\sqrt{3}a/2, -a/2 + bj,0)$ on $S_1^-(a)$ and $x_j^+ =
(\sqrt{3}a/2, -a/2 + bj,0)$ on $S_1^+(a)$ for $b>0$ and $j=0,1,
\ldots, J \in {\mathbb N}$, such that the sets $(L_{\{x_j^-\},
\{x_k^+\}})_{(j,k) \in \{0,1, \ldots, J\}^2}$ are mutually
disjoint: since $d(S_1^-(a), S_1^+(a)) = \sqrt{3}a$, the ${\mathbb
R}^2$-projection of any line whose cylinder intersects both
$S_1^-(a)$ and $S_1^+(a)$ has a slope of at most a constant $c_9 \in (0, \infty)$ for $a \geq c$ (indeed, the largest possible slope of such a line converges to $1/(4 \sqrt(3))$ as $a$ tends to infinity), so it is sufficient to choose $b = 2 \sqrt{1+c_9^2}$ and $J = [a/b] \geq ca$. Then we deduce with the
help of Lemma~\ref{l:nubd} applied to balls of radius $0$ and with
$\alpha = ca$ that
\begin{align*}
 \mu(L_{S_1^-, S_1^+}) &\geq \sum_{j=0}^J \sum_{k = 0}^J \mu (L_{\{x_j^-\}, \{x_k^+\}}) \geq \sum_{j=0}^J \sum_{k = 0}^J \frac{c}{a^2} \geq c_{10} >0,
\end{align*}
where the constant $c_{10}$ does not depend on $a$. Since the sets $(L_{S_i^-(a), S_i^+(a)})_{i=1}^3$ are disjoint for $a \geq 3$, the random variables $\omega(L_{S_i^-(a), S_i^+(a)})$ are independent under ${\mathbb P}_u$ and by rotation invariance of $\mu$ all Poisson-distributed with parameter $$u \mu ( L_{S_1^-(a), S_1^+(a)} ).$$ Hence, we can deduce from the last estimate that for $a \geq c + 3$,
\begin{equation}
\label{triangles2}
\begin{split}
{\mathbb P}_u [\Delta_a] &\geq  \left( 1 - e^{- u \mu ( L_{S_1^-(a), S_1^+(a)} )} \right)^3 \\
& \geq (1- e^{- u c_{10}} )^3 =: c_{11}(u)>0,
\end{split}
\end{equation}
where $c_{11}(u)$ does not depend on $a$. We now use this estimate on the sequence $a_n = 3^n$, $n \geq 1$. Note that $-3^n/2 > - 3^{n+1}/4 + b$ for $n \geq c$, so the set of cylinders intersecting both $S_1^-(3^n)$ and $S_1^+(3^n)$ is disjoint from the set of cylinders intersecting both $S_1^-(3^{n+1})$ and $S_1^+(3^{n+1})$, and the analogous statement also holds for $S_2^\pm$ and $S_3^\pm$. The events $(\Delta_{3^n})_{n \geq c}$ are therefore independent and we obtain from \eqref{triangles1}, \eqref{triangles2} and the Borel-Cantelli Lemma that ${\mathbb P}_u [ \{0\} \stackrel{{\mathcal V} \cap {\mathbb R}^2}{\longleftrightarrow} \infty ] = 0$, hence by translation invariance ${\mathbb P}_u [ \{x\} \stackrel{{\mathcal V} \cap {\mathbb R}^2}{\longleftrightarrow} \infty ] = 0$ for any $x \in {\mathbb R}^2 \subset {\mathbb R}^3$.  Since ${\mathbb P}_u [ \textup{Perc}_2 ] \leq \sum_{x \in {\mathbb Q}^2 \subset {\mathbb R}^3} {\mathbb P}_u [\{x\} \stackrel{{\mathcal V} \cap {\mathbb R}^2}{\longleftrightarrow} \infty ]$, this implies \eqref{e:triangles} and thus completes the proof of Proposition~\ref{p:triangles}.
\end{proof}

\begin{remark}\label{r:problems}
 We conclude the article by mentioning a few of the open problems raised by the above results.
\begin{enumerate}
\item Does the vacant set percolate for small $u>0$ in dimension $d=3$? We have seen in Proposition~\ref{p:triangles} that in dimension three, the vacant set does not percolate along any fixed two-dimensional subspace. This property distinguishes the Poisson cylinder model from both standard percolation and random interlacements, where percolation of the vacant set intersected with the two-dimensional plane does occur in dimension $3$, see \cite{SS09}. Proposition~\ref{p:triangles} does not rule out a random set of exceptional planes, however, along which percolation of the vacant set may occur even in dimension $3$ (see \cite{BS98} for the occurrence of such a phenomenon).
\item In the regime where the vacant set percolates, is the infinite component unique, as is the case in classic percolation (cf. \cite{GG99}, Theorem 8.1) and in random interlacements (cf. \cite{T08})?
\item Is the set of Poisson cylinders connected, as is the random interlacement (cf.~\cite{Szn09}, (2.21))?
\item What is the approximate value of $u_*(d)$ (for random interlacements, this question is studied in \cite{ASz10})?
\item What is the value of $\theta(u_*(d))$?
\end{enumerate}
\end{remark}

\bibliographystyle{plain}
\bibliography{tubes}

\end{document}